\documentclass{article}
\usepackage{amsmath,amssymb,amsthm}
\usepackage{accents}
\usepackage{color}
\usepackage[top=25truemm,bottom=25truemm,left=25truemm,right=25truemm]{geometry}
\usepackage{graphicx}
\usepackage{hyperref}
\usepackage{mathtools}
\theoremstyle{definition}
\newtheorem{Def}{Definition}[section]
\newtheorem{Lem}[Def]{Lemma}
\newtheorem{Prop}[Def]{Proposition}
\newtheorem{Thm}[Def]{Theorem}
\newtheorem{Cor}[Def]{Corollary}

\newtheorem{Rmk}[Def]{Remark}
\newtheorem{Alg}[Def]{Algorithm}

\makeatletter
\def\bar{\accentset{{\cc@style\underline{\mskip13mu}}}}

\makeatother

\title{{\bf\textsf{Generalized Howe curves of genus 4, 5, and 6\\ with completely decomposable Jacobians}}}
\author{Ryo Ohashi}

\begin{document}
\maketitle
\begin{abstract}
    Superspecial curves are important objects in number theory and algebraic geometry, and the existence in genus $g \geq 4$ remains an open problem for all but finitely many characteristics $p > 0$.
    As a computational approach to this problem, Kudo-Harashita-Howe~(2020) showed that a superspecial curve of genus 4 exists in each characteristic $p$ with $7 < p < 20000$.
    Their method restricted attention to a specific class of curves, known as \emph{Howe curves}, for which superspeciality is reduced to those of curves of genus at most 2.
    In this paper, we focus on a more specific class of curves, namely Howe curves whose Jacobians decompose into a product of four elliptic curves.
    By restricting our attention to such curves, the superspeciality reduces to the supersingularity of elliptic curves, which enables us to construct a superspecial curve of genus 4 more efficiently than Kudo-Harashita-Howe's method.
    As our first main result, we confirmed by computer the existence of such superspecial curves of genus 4 in characteristics $p$ with $20000 < p < 10^6$.
    Using a similar approach, we also propose constructions of superspecial curves of genera 5 and 6 from only supersingular elliptic curves.
    Furthermore, computational experiments establish the existence of superspecial curves of genus 5 (resp. genus 6) in characteristics $p$ with $13 < p < 10^5\hspace{-0.3mm}$ (resp. $7 < p < 10^5$).
\end{abstract}

\section{Introduction}
Throughout this paper, a \emph{curve} always means a non-singular projective variety of dimension one defined over a field of characteristic $p > 0$.
A curve is said to be \emph{superspecial} if its Jacobian is isomorphic to a product of supersingular elliptic curves (as unpolarized abelian varieties).
Superspecial curves are important objects not only in number theory but also in applications, including isogeny-based cryptography and algebraic geometry codes.
It is very natural to ask the following question:\medskip

\emph{Question.} For given $g$ and $p$, does there exist a superspecial curve of genus $g$ in characteristic $p$?\vspace{3mm}

\noindent This question is completely solved for genera $g \leq 3$.
In fact, Deuring~\cite{Deuring} showed that a supersingular elliptic curve (i.e., the case of $g=1$) exists in any characteristic $p > 0$.
Also, the non-existence of superspecial curves of genus $g=2$ for $p \in\hspace{-0.3mm} \{2,3\}$, and the existence of such curves for $p > 3$ were shown by Ibukiyama, Katsura, and Oort in \cite[Theorem 3.3]{IKO}.
Ekedahl proved in \cite[Theorem 1.1]{Ekedahl} that if there exists a superspecial curve of genus $g$ in characteristic $p$, then the inequality $2g \leq p^2-p$ holds, which implies that no superspecial curve of\\ genus $g=3$ exists in characteristic $p=2$.
The existence of superspecial curves of genus $g=3$ for $p > 2$ was shown by Oort~\cite[Theorem 5.12]{Oort} or Ibukiyama~\cite{Ibukiyama}.
On the other hand, the question for genus $g \geq 4$ remains open in general $p$.
One of the difficulties in this case lies in the fact that abelian varieties of dimension $g \geq 4$ are generally  not Jacobians, in contrast to the case $\hspace{-0.2mm}g \leq 3$.

In this paper, we restrict our attention to the cases $g \in\hspace{-0.3mm} \{4,5,6\}$.
We first review some previous results on the existence of superspecial curves of genus $g=4$ in small characteristic $p$.
By Ekedahl's bound, there is no\\ superspecial curve of genus 4 for $p \in\hspace{-0.2mm} \{2,3\}$.
The existence of a superspecial curve of genus $4$ for $p=5$ follows from the result \cite[Theorem 3.1]{FGT} by Fuhrmann, Garcia, and Torres, whereas the non-existence for $p=7$ was shown by Kudo and Harashita~\cite{KH}.
In addition, Kudo, Harashita, and Howe~\cite{KHH} verified that a superspecial curve of genus $4$ exists for all $p$ with $7 < p < 20000$ by using a computational method.
Their strategy was to focus on a specific class of genus-$4$ curves called \hspace{-0.2mm}\emph{Howe curves}, which enables us to reduce the superspeciality of such curves  to that of curves of genus at most $2$; \hspace{0.2mm}we refer the reader to Section \ref{subsec:generalized} for their constructions.
To verify the existence of superspecial curves of genus $4$ in characteristics greater than $20000$, we restrict our search to a subclass of Howe curves.
Explicitly, for $s,t \notin\hspace{-0.1mm} \{0,\pm1\}$, let $E_1$ and $E_2$ be elliptic curves defined by\vspace{-1.5mm}
\begin{align*}
    E_1: y^2 &= (x-1)(x-s)(x-t),\\[-1.5mm]
    E_2: y^2 &= (x+1)(x+s)(x+t),\\[-5.6mm]
\end{align*}
and then we consider the genus-4 curve $X_{s,t}$ obtained as the desingularization of the fiber product $E_1 \hspace{-0.3mm}\times_{\mathbb{P}^1}\hspace{-0.4mm} E_2$.
Since the superspeciality of $X_{s,t}$ is reduced to the supersingularity of three elliptic curves (Proposition \ref{prop:genus4_decomposition}), this allows a more efficient construction of superspecial curves of genus $4$.
Our main result for genus-$4$ curves, obtained by a computational method, is as follows:\vspace{-1.2mm}
\begin{Thm}\label{thm:genus4}
For every prime $p$ with $7 < p < 10^6$ and $p \neq 13, 19, 73$, there exists a superspecial curve $X_{s,t}$ of genus $4$ in characteristic $p$.\vspace{-0.3mm}
\end{Thm}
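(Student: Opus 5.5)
The proof is computational, and rests on Proposition~\ref{prop:genus4_decomposition}. That result reduces superspeciality of $X_{s,t}$ to supersingularity of three elliptic curves. These are $E_1$ (note $E_2$ is the quadratic twist of $E_1$ by $x\mapsto -x$, hence $\overline{\mathbb{F}}_p$-isomorphic to it) and the two elliptic quotients of the genus-$2$ curve $C: y^2=(x^2-1)(x^2-s^2)(x^2-t^2)$ under the extra involution $x\mapsto -x$. With $u=x^2$, these quotients are
\[
E_3: y^2=(u-1)(u-s^2)(u-t^2), \qquad E_4: y^2=u(u-1)(u-s^2)(u-t^2).
\]
I take as given from that proposition that the isogeny $\mathrm{Jac}(X_{s,t})\to E_1\times E_2\times E_3\times E_4$ has $2$-power degree. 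Since $p>2$, the $p$-torsion group scheme, and hence the $a$-number, is preserved. So $X_{s,t}$ is superspecial exactly when $E_1,E_3,E_4$ are supersingular. It therefore suffices, for each prime $7<p<10^6$ other than $13,19,73$, to exhibit $s,t\in\mathbb{F}_{p^2}\setminus\{0,\pm1\}$ (with $\pm1,\pm s,\pm t$ distinct) making all three curves supersingular.

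To make the search efficient, I would first precompute the set $\Lambda_p\subset\mathbb{F}_{p^2}$ of supersingular Legendre parameters. These are the roots of the Hasse polynomial $H_p(\lambda)=\sum_{i=0}^{(p-1)/2}\binom{(p-1)/2}{i}^2\lambda^i$, and $\Lambda_p$ has about $p/2$ elements. Equivalently, I could take the supersingular $j$-invariants, obtained by walking the $2$-isogeny graph from a known supersingular curve, and pull them back to $\lambda$'s. Store $\Lambda_p$ in a hash set.

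Supersingularity of $E_1$ is equivalent to the cross-ratio of its branch points $\{1,s,t,\infty\}$ lying in $\Lambda_p$. Thus for a given $s$ the admissible $t$ are $t=1+\lambda(s-1)$ with $\lambda\in\Lambda_p$. Likewise, $E_3$ is supersingular iff $(t^2-1)/(s^2-1)\in\Lambda_p$. The curve $E_4$ is checked by computing its $j$-invariant from the quartic and testing membership in the precomputed supersingular $j$-set. The search loop is then: fix $s$, run over $\lambda\in\Lambda_p$, form $t$, perform the two hash lookups, and stop at the first success. Heuristically each condition holds with probability about $1/p$. Each $s$ yields about $p/2$ candidates $t$ with $E_1$ already supersingular, so about $p$ choices of $s$ are expected. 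This gives roughly $p^2$ cheap lookups per prime; I would reduce the cost further by restricting $s$ itself to a structured set, for instance $s$ with $s^2$ related to elements of $\Lambda_p$.

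The main obstacle is this cost at $p\sim10^6$, summed over all primes up to $10^6$. I would first try to cut the search to pairs $(\lambda_1,\lambda_2)\in\Lambda_p^2$ by expressing $s,t$ in terms of $\lambda_1=(t-1)/(s-1)$ and $\lambda_2=(t^2-1)/(s^2-1)$. Dividing, $\lambda_2/\lambda_1=(t+1)/(s+1)$, which is linear, so $s,t$ are determined rationally by $(\lambda_1,\lambda_2)$. Only $E_4$ then remains to be checked, leaving about $p^2/4$ candidates with a $1/p$ success chance each, so early termination should occur quickly. The exceptional primes $13,19,73$ (and $p\le7$) are simply those where the exhaustive search finds no admissible $(s,t)$; the theorem makes no claim there. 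The proof consists of running this search and recording a witness $(s,t)$ for every other prime.
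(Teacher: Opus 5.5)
Your proposal is correct and is essentially the paper's argument. You reduce via Proposition~\ref{prop:genus4_decomposition} to supersingularity of $E_1,E_3,E_4$, enumerate pairs of supersingular Legendre parameters for $E_1$ and $E_3$, recover $(s,t)$ rationally, and test only $E_4$, exactly as in Algorithm~\ref{alg:genus4}; your cross-ratios $(t-1)/(s-1)$ and $(t^2-1)/(s^2-1)$ instead of the paper's $(t-s)/(1-s)$ and $(t^2-s^2)/(1-s^2)$ are a harmless renormalization. The only practical difference is that the paper settles every $p\equiv 5\pmod{6}$ at once with $s=\omega$, $t=\omega^2$ (Corollary~\ref{cor:existence_genus4}, all $j$-invariants $0$), and runs the search only for $p\equiv 1\pmod{6}$, first restricting to $\lambda$ with $j\in\mathbb{F}_p$.
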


\noindent By combining Theorem~\ref{thm:genus4} with Kudo-Harashita-Howe's result, the existence of superspecial genus-4 curves for all $p$ with $7 < p < 10^6$ follows.

We now turn to the case of genera $g \in\hspace{-0.3mm} \{5,6\}$; \hspace{0.3mm}relatively little is known about the existence of superspecial curves of genus $5$ and $6$.
By Ekedahl's bound, there are no superspecial curves of genus $5$ and $6$ for $p \in \{2,3\}$.
The non-existence holds also for $p = 5$.
Indeed, if there were a superspecial curve of genus 5 (resp. genus 6) in characteristic $p = 5$, then \cite[Lemma 2.2.1]{KH} would imply that a maximal curve of genus 5 (resp. genus 6) exists over $\mathbb{F}_{5^2}\hspace{-0.3mm}$.
This contradicts a result of Fuhrmann and Torres~\cite{FT}, which asserts that if a maximal curve of genus $g$ exists over $\mathbb{F}_{p^2}\hspace{-0.2mm}$, then either $4g \leq (p-1)^2$ or $2g = (p-1)p$.
To the best of the author's knowledge, no results are known that show the non-existence of superspecial curves of genus $5$ and $6$ for $p \geq 7$.
Although unpublished, Kudo, Moriya, and the author~\cite{KMO} reported that a superspecial curve of genus $5$ exists for all $p$\\ with $7 \leq p < 100$ and $p \ne 13$, constructed from \hspace{-0.2mm}\emph{generalized Howe curves} defined in \cite{KT24}.
Their method is not very efficient, since it requires generating a large number of superspecial curves of genus 2.
In this paper, we therefore restrict our search to a subclass of generalized Howe curves of genera $5$ and $6$, whose \hspace{-0.1mm}Jacobians are completely decomposable; \hspace{0.3mm}see Sections \ref{sec:genus5} and \ref{sec:genus6} for explicit models of these curves.
As in the case of genus $4$, the superspeciality of our curves can be reduced to the supersingularity of three elliptic curves, which enables an efficient construction of superspecial curves of genus $5$ and $6$. 
Here are our main results for genus-5 curves and genus-6 curves:\vspace{-0.9mm}
\begin{Thm}\label{thm:genus5}
For every prime $p$ with $7 \leq p < 10^5$ and $p \neq 13$, there exists a superspecial curve of genus $5$ in characteristic $p$.\vspace{-1.3mm}
\end{Thm}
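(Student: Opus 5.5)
The plan is to realize a genus-$5$ superspecial curve as a generalized Howe curve whose Jacobian splits, up to isogeny, into five elliptic curves. We then reduce superspeciality to the supersingularity of a few explicit elliptic curves and search over parameters by computer.

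\textbf{Construction.} Take $C$ to be the normalization of a fiber product of three double covers of $\mathbb{P}^1$, i.e.\ a $(\mathbb{Z}/2)^2$-cover of a genus-$1$ curve or of $\mathbb{P}^1$. The branch data are chosen as in Sections~\ref{sec:genus5}, which uses symmetric parameters $s,t$ as in $X_{s,t}$. By Kani--Rosen (the decomposition for $(\mathbb{Z}/2)^2$-actions, as in the proof of Proposition~\ref{prop:genus4_decomposition}), $\mathrm{Jac}(C)$ is isogenous to the product of the Jacobians of the intermediate quotient curves. These quotients should all be elliptic curves. A further involution, coming from the symmetry $x\mapsto -x$ or a M\"obius map permuting the branch points, identifies several of them, so only three isomorphism classes $E,E',E''$ occur, each given by an explicit Legendre-type model in $s,t$.

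\textbf{Reduction to supersingularity.} Since superspeciality is preserved when passing to products of isogenous supersingular elliptic curves (for $g\ge2$, any abelian variety isogenous to a product of supersingular elliptic curves that is itself a product of supersingular elliptic curves is superspecial), it suffices that each factor is supersingular and the isogeny respects the product structure. More precisely, by Oort/Deligne, any product of $g\ge 2$ supersingular elliptic curves is isomorphic to $E_0^g$. What really has to be shown is that $\mathrm{Jac}(C)$ is superspecial, i.e.\ $a(C)=5$, and not merely supersingular. I would follow Kudo--Harashita--Howe: the isogeny from Kani--Rosen has degree a power of $2$. Because $p$ is odd, the $p$-torsion group schemes, and hence the $a$-numbers, agree on both sides. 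Therefore $\mathrm{Jac}(C)$ is superspecial if and only if every factor elliptic curve is supersingular.

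\textbf{Computation.} For each prime $7\le p<10^5$, $p\ne13$, we search $s,t\in\mathbb{F}_{p^2}$ (or suitable sub-families) such that the three $\lambda$-invariants are all supersingular. Two of these constraints are imposed by choosing the parameters from the known list of supersingular $\lambda$-values; these are the roots of the Deuring polynomial $H_p(\lambda)$, all lying in $\mathbb{F}_{p^2}$. We then solve for the remaining parameter and test supersingularity of the third curve via $H_p$. We must also check the nondegeneracy conditions, namely distinct branch points, so that the genus is exactly $5$.

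The \textbf{main obstacle} is guaranteeing that a solution exists for every $p$ in range. There is no proof of this, so the theorem rests on exhaustive computer verification, with small $p$ handled separately. The efficiency comes from enumerating only $O(p)$ supersingular $\lambda$'s rather than genus-$2$ curves. For $p=13$ the search fails, which explains the exclusion.
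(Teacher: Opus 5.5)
There are two genuine gaps.

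\textbf{The construction does not give genus $5$.} For a $(\mathbb{Z}/2)^2$-cover $C\to C/G$ with intermediate quotients $C/H_1,C/H_2,C/H_3$, Kani--Rosen gives $g(C)=\sum_i g(C/H_i)-2g(C/G)$. So if all three quotients are elliptic, then $g(C)\le 3$, whether the base is $\mathbb{P}^1$ or an elliptic curve. No further involution ``identifying'' quotients changes this. A fiber product of three double covers would instead be a $(\mathbb{Z}/2)^3$-cover, and you give no branch data for that.

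In the paper, $Y_{s,t}$ is the fiber product of two \emph{genus-$2$} curves, $C_1:y^2=x(x^2-1)(x^2-s^2)$ and $C_2:y^2=x(x^2-1)(x^2-t^2)$, sharing the four Weierstrass points $0,\pm1,\infty$. Hence $g=2+2+1=5$, and only the third quotient $E_3:y^2=(x^2-s^2)(x^2-t^2)$ is elliptic. Complete decomposability comes from each $C_i$ lying in Case~3 of Table~\ref{g2-auto}. Proposition~\ref{prop:Case3}, with $c_1=s+1/s$ and $c_2=t+1/t$, then gives $J(C_i)\sim E_i^2$ via a $2$-power isogeny, so $J(Y_{s,t})\to E_1^2\times E_2^2\times E_3$.

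Correspondingly, the search is not the genus-$4$ procedure of picking supersingular $\lambda$-values and ``solving for the remaining parameter''. One picks supersingular $j_1,j_2$, solves the sextics $64(3s-1)^3(s-3)^3=j_1(s-1)^2(s+1)^4$ and the analogous one for $t$, and tests only $E_3$. Your argument that a $2$-power isogeny preserves $a$-numbers when $p$ is odd is fine.

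\textbf{The restricted family does not cover the stated range.} The computation yields superspecial $Y_{s,t}$ only for $19<p<10^5$ with $p\neq 37,53,89,97,101,137$. For $p\in\{7,11,17,19,37,53,89,97,101,137\}$ the paper needs a different, broader search. It lists Rosenhain forms of all superspecial genus-$2$ curves and looks for pairs $C_1,C_2$ sharing exactly four Weierstrass points with $E_3:y^2=(x-\lambda_2)(x-\lambda_3)(x-\lambda_2')(x-\lambda_3')$ supersingular; Theorem~\ref{thm:decomposition} then gives superspeciality.

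Your phrase ``small $p$ handled separately'' neither identifies these exceptions (several are far from small) nor supplies a method for them. So as written, your argument leaves a nonempty set of primes uncovered. Also, $p=13$ is excluded because even this broader search finds nothing, not merely because the restricted family fails.
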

\begin{Thm}\label{thm:genus6}
For every prime $p$ with $7 < p < 10^5$, there exists a superspecial curve of genus $6$ in charact-\\eristic $p$.\vspace{-0.2mm}
\end{Thm}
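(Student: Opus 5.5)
We will prove Theorem~\ref{thm:genus6} computationally, by the same strategy as for Theorem~\ref{thm:genus4}. First we construct an explicit family of genus-$6$ generalized Howe curves in the sense of \cite{KT24} whose Jacobians are completely decomposable. A natural candidate is the normalization of a fiber product over $\mathbb{P}^1$ of three double covers. Concretely, take a $(\mathbb{Z}/2)^2$-cover $C \to \mathbb{P}^1$ whose three intermediate quotients $C_1, C_2, C_3$ are hyperelliptic. The branch loci are arranged so that the genera satisfy $g(C_1)+g(C_2)+g(C_3)=6$, for example $g(C_i)=2$ for each $i$. By Kani--Rosen we then have
\[
J(C) \sim J(C_1)\times J(C_2)\times J(C_3).
\]
We then impose an extra involution on each $C_i$, using symmetric branch points such as $\{\pm 1,\pm s,\pm t\}$ in the spirit of $X_{s,t}$. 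This makes each $J(C_i)$ split, again by Kani--Rosen applied to the $(\mathbb{Z}/2)^2 \subset \mathrm{Aut}(C_i)$. Because of the symmetry, the resulting six elliptic factors coincide in pairs up to isomorphism or isogeny, leaving only three distinct elliptic curves $E_1,E_2,E_3$. This is the analogue of Proposition~\ref{prop:genus4_decomposition}.

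Next we record that superspeciality is an isogeny-invariant condition for products of elliptic curves. By Oort's theorem, $J(C)$ is superspecial if and only if it is isogenous to a product of supersingular elliptic curves; this uses only that the $a$-number is $g$ exactly when the abelian variety is superspecial, and that a product of supersingular elliptic curves is superspecial. Hence $C$ is superspecial if and only if $E_1,E_2,E_3$ are all supersingular. One caveat is that for an isogeny we need ``isomorphic'' in the definition. Since Kani--Rosen gives only an isogeny, we cite the standard fact that any abelian variety isogenous to a product of supersingular elliptic curves and having $a$-number $g$ is superspecial. Alternatively, we can compute the Cartier--Manin matrix directly and show that it vanishes exactly when the three Hasse invariants vanish.

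The search itself runs as follows. Write the $j$-invariants (or Legendre parameters) of $E_1,E_2,E_3$ as explicit rational functions of the parameters $(s,t)$. For each prime $p$ with $7<p<10^5$, we enumerate the supersingular Legendre $\lambda$-values in $\mathbb{F}_{p^2}$, which are the roots of the Deuring polynomial. We first choose $s,t$ so that $E_1$ and $E_2$ are supersingular, for instance by solving for $t$ from supersingular $\lambda$-values. We then test whether $E_3$ is supersingular, and we also check the non-degeneracy conditions $s,t\notin\{0,\pm1\}$ and distinct branch points, so that $C$ is smooth of genus $6$. The test uses the Hasse invariant, or a quick supersingularity test via $j\in\mathbb{F}_{p^2}$ and point counting.

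The main obstacle is that this is a heuristic existence search. There is no a priori reason that a triple of supersingular curves occurs; we expect about $p\cdot(1/p)$ solutions, so of order one or more per prime. The proof of the theorem for each $p$ is therefore the computer output listing a witness $(s,t)$. The step we expect to be hardest is making the search fast enough up to $10^5$: reducing to one free parameter after imposing two supersingularity conditions, and testing the third efficiently. We would first try parametrizing $E_1$ by its supersingular $\lambda$ values and expressing the remaining condition as a polynomial in $t$ whose roots in $\mathbb{F}_{p^2}$ we compute via gcd with $t^{p^2}-t$. Small primes where the search fails would be handled separately; the statement suggests that none fail for $p>7$.
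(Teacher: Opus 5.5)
Your overall plan matches the paper's: a genus-6 generalized Howe curve with $r=3$, Theorem~\ref{thm:decomposition}, reduction to supersingularity of a few elliptic curves, and a search over supersingular invariants. However, the step that makes it work is missing, and the mechanism you propose cannot supply it.

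\textbf{The construction.} The branch loci of $C_1,C_2,C_3$ are $A\cup B$, $A\cup C$, $B\cup C$, with $A,B,C$ pairwise disjoint $3$-point sets. Suppose a M\"obius involution preserves all three loci. Then it preserves $A=(A\cup B)\cap(A\cup C)$, and likewise $B$ and $C$. Each has odd cardinality, so the involution fixes a point in each of them, but it has only two fixed points.

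So ``symmetric branch points $\{\pm1,\pm s,\pm t\}$'' can at best swap two quotients, as $x\mapsto -x$ swaps $E_1,E_2$ in genus $4$. That gives $J(C_1)^2\times E\times E'$. Splitting $J(C_1)$ costs a further condition and yields four unrelated elliptic curves. Separate involutions on each $C_i$ yield six, since the two factors in Proposition~\ref{prop:Case1} are in general non-isogenous. Nothing makes them ``coincide in pairs''.

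The idea you need is an order-$3$ symmetry, which is compatible with three disjoint $3$-point sets because they can be its orbits. Take $A,B,C$ to be the roots of $x^3-1$, $x^3-s^6$, $x^3-t^6$, all orbits of $x\mapsto\omega x$. Then:
\begin{itemize}
\item every $C_i$ lies in Case 2, so $J(C_i)\sim E_i^2$ by Proposition~\ref{prop:Case2};
\item $C_3\cong\bigl(y^2=(x^3-1)(x^3-s^6/t^6)\bigr)$, so all three $j$-invariants are explicit rational functions of $s^3,t^3$.
\end{itemize}
This is the family $Z_{s,t}$. It has two parameters and three supersingularity conditions, which is what gives the $\Theta(p)$ heuristic (not your $p\cdot(1/p)$) and lets Algorithm~\ref{alg:genus6} succeed.

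\textbf{The superspeciality criterion.} ``Superspecial iff isogenous to a product of supersingular elliptic curves'' is false; that characterizes supersingularity. The fact you fall back on presupposes $a(J(C))=g$, which is exactly what must be proved. The correct justification is that every isogeny here has degree a product of powers of $2$ and $3$, hence prime to $p$. Such an isogeny induces an isomorphism on $p$-torsion group schemes and so preserves the $a$-number. This is how Theorem~\ref{thm:decomposition} and Proposition~\ref{prop:genus6_decomposition} are used.

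\textbf{The exceptional primes.} The theorem asserts existence of \emph{some} superspecial genus-6 curve, not one in your family, and the family search genuinely fails for some primes. Theorem~\ref{thm:pre-genus6} misses $p=11$ and nineteen primes from $19$ to $571$. The paper covers these by a separate search over generalized Howe curves built from pairs of superspecial genus-2 curves in Rosenhain form sharing exactly three Weierstrass points, checking that the third quotient is superspecial (Appendix~\ref{app:genus6}). Saying these would be ``handled separately'', with no method, leaves those primes unproved.
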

\noindent For details of the computational experiments, see Sections~\ref{subsec:experiment_for_genus5} and \ref{subsec:experiment_for_genus6}.
The above results can be summarized in Table \ref{tbl:summary}, where \textcolor{cyan}{$\raisebox{0.3mm}{\scalebox{0.75}{$\bigcirc$}}$} (resp. \textcolor{magenta}{$\hspace{-0.3mm}\times$}) indicates existence (resp. non-existence) of superspecial curves:\vspace{-2.8mm}

\begin{table}[h]
    \centering
    \caption{The existence of superspecial curves of genus $g$ in characteristic $p$}\label{tbl:summary}\vspace{2mm}
    \begin{tabular}{c|c|c|c|c|c|c|l}
        $\!\!g \hspace{0.1mm}\backslash\hspace{0.5mm} p\!$ & $2$ & $3$ & $5$ & $7$ & $11$ & $13$ & $\hspace{9.5mm}\cdots$\\\hline
        $1$ & \textcolor{cyan}{$\scalebox{0.75}{$\bigcirc$}$} & \textcolor{cyan}{$\scalebox{0.75}{$\bigcirc$}$} & \textcolor{cyan}{$\scalebox{0.75}{$\bigcirc$}$} & \textcolor{cyan}{$\scalebox{0.75}{$\bigcirc$}$} & \textcolor{cyan}{$\scalebox{0.75}{$\bigcirc$}$} & \textcolor{cyan}{$\scalebox{0.75}{$\bigcirc$}$} & \textcolor{cyan}{$\scalebox{0.75}{$\bigcirc$}$} \text{\,for all} $p$\\\hline
        $2$ & \textcolor{magenta}{$\times$} & \textcolor{magenta}{$\times$} & \textcolor{cyan}{$\scalebox{0.75}{$\bigcirc$}$} & \textcolor{cyan}{$\scalebox{0.75}{$\bigcirc$}$} & \textcolor{cyan}{$\scalebox{0.75}{$\bigcirc$}$} & \textcolor{cyan}{$\scalebox{0.75}{$\bigcirc$}$} & \textcolor{cyan}{$\scalebox{0.75}{$\bigcirc$}$} \text{\,for all} $p$\\\hline
        $3$ & \textcolor{magenta}{$\times$} & \textcolor{cyan}{$\scalebox{0.75}{$\bigcirc$}$} & \textcolor{cyan}{$\scalebox{0.75}{$\bigcirc$}$} & \textcolor{cyan}{$\scalebox{0.75}{$\bigcirc$}$} & \textcolor{cyan}{$\scalebox{0.75}{$\bigcirc$}$} & \textcolor{cyan}{$\scalebox{0.75}{$\bigcirc$}$} & \textcolor{cyan}{$\scalebox{0.75}{$\bigcirc$}$} \text{\,for all} $p$\\\hline
        $4$ & \textcolor{magenta}{$\times$} & \textcolor{magenta}{$\times$} & \textcolor{cyan}{$\scalebox{0.75}{$\bigcirc$}$} & \textcolor{magenta}{$\times$} & \textcolor{cyan}{$\scalebox{0.75}{$\bigcirc$}$} & \textcolor{cyan}{$\scalebox{0.75}{$\bigcirc$}$} & \textcolor{cyan}{$\scalebox{0.75}{$\bigcirc$}$} \text{\,for\,} $p < 10^6$\\\hline
        $5$ & \textcolor{magenta}{$\times$} & \textcolor{magenta}{$\times$} & \textcolor{magenta}{$\times$} & \textcolor{cyan}{$\scalebox{0.75}{$\bigcirc$}$} & \textcolor{cyan}{$\scalebox{0.75}{$\bigcirc$}$} & ?? & \textcolor{cyan}{$\scalebox{0.75}{$\bigcirc$}$} \text{\,for\,} $p < 10^5$\\\hline
        $6$ & \textcolor{magenta}{$\times$} & \textcolor{magenta}{$\times$} & \textcolor{magenta}{$\times$} & ?? & \textcolor{cyan}{$\scalebox{0.75}{$\bigcirc$}$} & \textcolor{cyan}{$\scalebox{0.75}{$\bigcirc$}$} & \textcolor{cyan}{$\scalebox{0.75}{$\bigcirc$}$} \text{\,for\,} $p < 10^5$
    \end{tabular}
\end{table}\vspace{-0.3mm}

\noindent It remains open whether superspecial curves exist for $(g,p) = (5,13)$ and $(6,7)$.

The remainder of this paper is organized as follows.
Section \ref{sec:preliminaries} is devoted to preliminaries for the subsequent sections.
Explicitly, Section \ref{subsec:decomposition} summarizes basic facts on the Jacobian decomposition of genus-$2$ curves, and Section \ref{subsec:generalized} recalls the definition of (generalized) Howe curves.
In Section \ref{sec:genus4}, we construct a family of genus-4 curves whose Jacobians are completely decomposable and propose our algorithm for generating a superspecial curve of genus 4.
Similarly, in Section \ref{sec:genus5} \hspace{0.3mm}(resp. Section \ref{sec:genus6}), we explain our algorithm for verifying the existence of superspecial curves of genus 5 (resp. genus 6).
Finally, Section \ref{sec:experiment} presents the results obtained by executing these algorithms, along with the proofs of Theorems~\ref{thm:genus4}, \ref{thm:genus5}, and \ref{thm:genus6}.

\section{Preliminaries}\label{sec:preliminaries}
In this section, we recall several results on the Jacobian decomposition of genus-2 curves, together with the definition of generalized Howe curves.
Let $K$ be a field of characteristic $p > 5$ throughout this paper.\vspace{-1mm}

\subsection{Jacobian decomposition of genus-2 curves}\label{subsec:decomposition}
For a genus-2 curve $C$ over $K$, we denote by ${\rm Aut}(C)$ the group of all automorphisms of $C$ over $\bar{K}$, and define the quotient\vspace{-1.4mm}
\[
    \mathrm{RA}(C) \coloneqq \mathrm{Aut}(C)/\langle\iota_C\rangle\vspace{0.8mm}
\]
to be the \emph{reduced automorphism group} of $C$, where $\iota_C$ denotes the hyperelliptic involution of $C$.
The possible reduced automorphism groups of genus-2 curves were classified by Igusa~\cite[Section 8]{Igusa} into the following seven finite groups:\vspace{-0.9mm}
\[
    \{1\},\ \mathbb{Z}/2\mathbb{Z},\ S_3,\ D_4 = \mathbb{Z}/2\mathbb{Z} \times \mathbb{Z}/2\mathbb{Z},\ D_{12},\ S_4,\ \mathbb{Z}/5\mathbb{Z},\vspace{0.7mm}
\]
where $D_{2n}$ and $S_n$ denote the dihedral group of order $2n$ and the symmetric group of degree $n$, respectively.
Table~\ref{g2-auto} provides a list of defining equations for genus-2 curves corresponding to each reduced automorphism group, taken from \cite[Section 5]{KT20}.\vspace{-3.5mm}
\begin{table}[htbp]
    \centering
    \caption{Classification of genus-2 curves $C$ in characteristic $p > 5$}\label{g2-auto}\vspace{2mm}
    \begin{tabular}{c||c|l}
        Case & $\mathrm{RA}(C)$ & A  defining equation of $C$\\\hline
        0 & $\{1\}$ & $y^2 = (\text{square-free polynomial in $x$ of degree $5$ or $6$})$\\\hline
        1 & $\mathbb{Z}/2\mathbb{Z}$ & $y^2 = (x^2-1)(x^2-a^2)(x^2-b^2)$\\\hline
        2 & $S_3$ & $y^2 = (x^3-1)(x^3-a^3)$\\\hline
        3 & $D_4$ & $y^2 = x(x^2-1)(x^2-a^2)$\\\hline
        4 & $D_{12}$ & $y^2 = x^6-1$\\\hline
        5 & $S_4$ & $y^2 = x(x^4-1)$\\\hline
        6 & $\mathbb{Z}/5\mathbb{Z}$ & $y^2 = x^5-1$
    \end{tabular}
\end{table}\vspace{-0.9mm}

\noindent Here, it should be noted that:\vspace{-1.1mm}
\begin{itemize}
    \item Each curve in Case 2 or 3 is a specialization of a curve in Case 1, and\vspace{-2.6mm}
    \item Each curve in Case 4 or 5 is a specialization of curves in both Case 2 and Case 3.\vspace{-0.3mm}
\end{itemize}
We refer to \cite[Theorem 1.1]{Ishii} for explicit conditions on the parameters $a$ and $b$ in Table~\ref{g2-auto} which yield these specializations.

The Jacobian of a curve in Case 1 (or of its specialization) is isogenous to a product of two elliptic curves. 
More precisely, for a genus-2 curve\vspace{-1.7mm}
\begin{equation}\label{eq:Case1}
    C: y^2 = (x^2-1)(x^2-a^2)(x^2-b^2)\vspace{-0.6mm}
\end{equation}
where $a,b \in \bar{K}\!\smallsetminus\!\{0,\pm1\}$ with $a^2 \hspace{-0.3mm}\neq b^2$, the involutions $\sigma: (x,y) \mapsto (-x,y)$ and $\tau: (x,y) \mapsto (-x,-y)$ define the quotient maps $C \to C/\langle\sigma\rangle \eqqcolon E_\sigma$ and $C \to C/\langle\tau\rangle \eqqcolon E_\tau$, where $E_\sigma$ and $E_\tau$ are elliptic curves given by\vspace{-0.4mm}
\begin{alignat*}{2}
    E_\sigma: v^2 &= (u-1)(u-a^2)(u-b^2) && \,\text{ with }\, u = x^2,\ v = y,\\[-1mm]
    E_\tau: v^2 &= u(u-1)(u-a^2)(u-b^2) && \,\text{ with }\, u = x^2,\ v = xy.\\[-4.4mm]
\end{alignat*}
These projections $C \rightarrow E_\sigma$ and $C \rightarrow E_\tau$ induce the isogeny $\hspace{-0.1mm}J(C) \to E_\sigma \times E_\tau$.
By transforming $E_\sigma,E_\tau$ into Legendre form, we obtain the following proposition:\vspace{-1mm}
\begin{Prop}[cf. {\cite[Section 2]{KO}}]\label{prop:Case1}
Let $C$ be the genus-2 curve defined by the equation~\eqref{eq:Case1}.
Then, the Jacobian of $C$ is isogenous to $E_\sigma \times E_\tau$, where $E_\sigma$ and $E_\tau$ are the elliptic curves\vspace{-0.1mm}
\begin{align*}
    E_\sigma: v^2 &= u(u-1)\biggl(u-\frac{b^2-a^2}{1-a^2}\biggr),\\[-0.3mm]
    E_\tau: v^2 &= v(u-1)\biggl(u-\frac{b^2-a^2}{b^2(1-a^2)}\biggr),
\end{align*}
and the degree of the isogeny is a power of $2$.
\end{Prop}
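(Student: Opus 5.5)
The plan is to build an explicit isogeny from the two quotient maps and check that its composite with its dual is multiplication by $2$. Then I will compute Legendre forms by Möbius changes of the coordinate $u$. Everything is over $\bar{K}$, so scaling $v$ by square roots of constants is harmless.

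\emph{Step 1: the isogeny.} Let $\pi_\sigma: C \to E_\sigma$ and $\pi_\tau: C \to E_\tau$ be the quotient maps. Define
\[
\phi = (\pi_{\sigma*},\pi_{\tau*}): J(C) \to E_\sigma \times E_\tau
\]
by pushforward of divisors, and let $\psi = \pi_\sigma^* + \pi_\tau^*: E_\sigma \times E_\tau \to J(C)$. For a degree-$2$ quotient by an involution $\gamma$, we have $\pi_\gamma^*\pi_{\gamma*} = 1+\gamma$ on $J(C)$. Hence
\[
\psi\circ\phi = (1+\sigma)+(1+\tau) = 2+\sigma+\tau.
\]
The involutions commute, and $\sigma\tau = \iota_C$ because it sends $(x,y)\mapsto(x,-y)$. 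So $\tau = \sigma\iota_C$, and since $\iota_C$ acts as $-1$ on $J(C)$, we get $\sigma+\tau = \sigma(1+\iota_C) = 0$ on $J(C)$. Thus $\psi\circ\phi = [2]$.

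Both sides of $\phi$ are $2$-dimensional. Therefore $\phi$ is an isogeny with $\ker\phi \subseteq J(C)[2]$, so its degree divides $16$ and is a power of $2$. (Symmetrically, $\pi_{\gamma*}\pi_\gamma^* = [2]$ on each factor, giving $\phi\circ\psi = [2]$ as well.)

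\emph{Step 2: Legendre form of $E_\sigma$.} The curve $v^2=(u-1)(u-a^2)(u-b^2)$ has roots $1,a^2,b^2$. Substitute $u = a^2 + (1-a^2)u'$, so that $a^2\mapsto 0$, $1\mapsto 1$ and $b^2 \mapsto \frac{b^2-a^2}{1-a^2}$. Then rescale $v$ by $(1-a^2)^{3/2}$. This gives the stated model.

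\emph{Step 3: Legendre form of $E_\tau$.} The model $v^2=u(u-1)(u-a^2)(u-b^2)$ is a quartic. Put $u=1/w$ and $v = v'/w^2$ to move the root $u=0$ to infinity. This gives $v'^2 = (1-w)(1-a^2w)(1-b^2w)$, a cubic with roots $1, a^{-2}, b^{-2}$. Normalize with $w = a^{-2} + (1-a^{-2})w'$, so that $a^{-2}\mapsto 0$ and $1\mapsto 1$. The third root goes to
\[
\frac{b^{-2}-a^{-2}}{1-a^{-2}} = \frac{b^2-a^2}{b^2(1-a^2)}.
\]
Rescaling $v'$ then yields $v^2=u(u-1)\bigl(u-\frac{b^2-a^2}{b^2(1-a^2)}\bigr)$; the statement's ``$v(u-1)$'' should read $u(u-1)$.

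The genericity assumptions $a,b\notin\{0,\pm1\}$ and $a^2\neq b^2$ ensure that these Legendre parameters avoid $0,1,\infty$, so the curves are genuinely elliptic. The only step needing real care is Step 1, specifically the identity $\sigma+\tau=0$ on $J(C)$; the remaining steps are routine coordinate changes.
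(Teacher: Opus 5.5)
Your proof is correct and follows the same route as the paper: the two quotient maps $C \to E_\sigma$ and $C \to E_\tau$ induce the isogeny, and M\"obius changes of $u$ put both quotients into Legendre form. You also supply the argument that the degree divides $16$, via $\psi\circ\phi = 2+\sigma+\tau = [2]$ using $\sigma\tau=\iota_C$, which the paper leaves to the cited reference, and you correctly note that $v(u-1)$ in $E_\tau$ is a typo for $u(u-1)$.
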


\newpage
In particular, the Jacobian of a genus-2 curve in Cases 2--5 is isogenous to the square of an elliptic curve.
Explicitly, the following propositions are known:\vspace{-1.2mm}
\begin{Prop}[cf. {\cite[Proposition 4.1]{FS}}]\label{prop:Case3}
Let $C: y^2 = x(x^2-1)(x^2-a^2)$ be a genus-2 curve with $a^2 \neq 0,1$.
Then, the Jacobian of $C$ is isogenous to $E^2$, where $E$ is the elliptic curve\vspace{-1.5mm}
\[
    E: v^2 = (c+2)u^3 - (3c-10)u^2 + (3c-10)u - (c+2)\vspace{-1.8mm}
\]
with the $j$-invariant\vspace{-1.4mm}
\[
    j(E) = 64\frac{(3c-10)^3}{(c-2)(c+2)^2},\vspace{-0.9mm}
\]
for $c \in \bar{K}$ satisfying $c^2 = a^2 + 1/a^2 + 2$, and the degree of the isogeny is a power of 2.\vspace{-1.3mm}
\end{Prop}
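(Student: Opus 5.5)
The curve $C: y^2 = x(x^2-1)(x^2-a^2)$ is a Case 3 curve, so it carries the reduced automorphism group $D_4$. My plan is to use the extra involution $\sigma:(x,y)\mapsto(a/x,\; a^{3/2}y/x^3)$. Together with the hyperelliptic involution it gives a second involution $\sigma\iota_C$. I will check that both quotients $C/\langle\sigma\rangle$ and $C/\langle\sigma\iota_C\rangle$ are elliptic curves. One sees directly that $\sigma$ preserves the equation: substituting $x\mapsto a/x$ multiplies $x(x^2-1)(x^2-a^2)$ by $a^3/x^6$.

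As in the discussion before Proposition~\ref{prop:Case1}, the two quotient maps induce an isogeny $J(C)\to E_1\times E_2$ whose kernel is $2$-power torsion. The reason is that the composition with the pullback is multiplication by $2$. This gives the claim about the isogeny degree.

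\emph{Computing $E_1$.} I take the invariant coordinate $w = x + a/x$. Since
\[
x^2-1 \text{ and } x^2-a^2 \text{ pair up via } (x^2-1)(x^2-a^2) = x^2\bigl((x+a/x)^2 - (a+1)^2\bigr)\cdot(\text{const}) ,
\]
the quotient curve takes the form $v^2 = (\text{cubic in } w)$ after an appropriate choice of $v = y/x^{3/2}\cdot(\text{const})$. Concretely, I expect a model $v^2 = (w\pm 2\sqrt a)(w^2-(1+a)^2)$ up to twists.

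\emph{Computing $E_2$.} The same computation with the sign flipped gives $E_2$. I then expect $E_1\cong E_2$, because the automorphism $x\mapsto -x$ conjugates $\sigma$ to $\sigma\iota_C$ (up to $\iota_C$). An isomorphism of subgroups carries one quotient onto the other, so $J(C)\sim E^2$.

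\emph{Matching the stated model.} It remains to bring $E$ into the stated form and compute $j$. I will apply a Möbius change of variable in $w$ sending the three roots to the roots of $(c+2)u^3-(3c-10)u^2+(3c-10)u-(c+2)$. This cubic is anti-palindromic, so $u=1$ is a root, and the other two roots satisfy $u+1/u = \text{const}$. That symmetry reflects $w\mapsto -w$ and expresses everything through $c^2=(a+1/a)^2$. The $j$-invariant then follows from the standard formula for a cubic, yielding $64(3c-10)^3/((c-2)(c+2)^2)$. It is enough to verify this $j$-invariant: the quotient is only claimed to be isogenous to $E^2$, and over $\bar K$ curves with equal $j$-invariants are isomorphic.

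The main obstacle is the bookkeeping in this last normalization. In particular, I must choose the branch of $\sqrt a$ and the sign of $c$ consistently. Replacing $c$ by $-c$ should correspond to swapping the two quotients, so either choice is legitimate.
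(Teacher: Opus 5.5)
Your construction is correct and self-contained; the paper itself gives no argument here and simply defers to Proposition~4.1 of [FS]. The involution $\sigma$ is right. The invariant function you need is $v=y(x+\sqrt a)/x^2$, not $y/x^{3/2}$, which is not a rational function on $C$. With this $v$ you get exactly $C/\langle\sigma\rangle: v^2=(w+2\sqrt a)(w^2-(1+a)^2)$. The lift $(x,y)\mapsto(-x,\sqrt{-1}\,y)$ of $x\mapsto -x$ conjugates $\sigma$ exactly to $\sigma\iota_C$, and the push--pull argument gives the $2$-power degree. The stated cubic has roots $1,r,r^{-1}$ with $r+r^{-1}=(2c-12)/(c+2)$, and it does have $j$-invariant $64(3c-10)^3/((c-2)(c+2)^2)$.

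What fails is your treatment of the sign of $c$. Finishing your computation gives
\[
j\bigl(C/\langle\sigma\rangle\bigr)=64\,\frac{(3a^2+10a+3)^3}{(a-1)^4(a+1)^2}=64\,\frac{(3c'+10)^3}{(c'-2)^2(c'+2)},\qquad c'=a+\frac{1}{a}.
\]
This is the stated formula evaluated at $c=-c'$, not at $c=c'$.

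The stated $j$ is not even in $c$: it vanishes at $c=10/3$ but not at $c=-10/3$. So nothing is expressed through $c^2$ alone. Replacing $c$ by $-c$ also cannot correspond to swapping $C/\langle\sigma\rangle$ and $C/\langle\sigma\iota_C\rangle$. You proved those two quotients are isomorphic, and the operation that swaps them (changing the branch of $\sqrt a$) does not touch $c$ at all.

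The correct mechanism is different. The map $x\mapsto -a/x$ lifts to a second pair of involutions $(x,y)\mapsto(-a/x,\pm(-a)^{3/2}y/x^3)$, in general not conjugate to $\sigma$. Since $C$ depends only on $a^2$, you can rerun your argument with $-a$ in place of $a$. This yields a quotient that is isogenous, but in general not isomorphic, to the first, and its $j$-invariant is the stated formula at $c=a+1/a$.

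This is not cosmetic. Section~\ref{sec:genus5} uses exactly $c_1=s+1/s$ for $a=s$, which is the sign your $\sigma$ does not produce. With this fix, covering both signs by the two non-conjugate involution pairs, the proof is complete.
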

\begin{Prop}[cf. {\cite[Proposition 4.2]{FS}}]\label{prop:Case2}
Let $C: y^2 = (x^3-1)(x^3-a^3)$ be a genus-2 curve with $a^3 \neq 0,1$.
Then, the Jacobian of $C$ is isogenous to $E^2$, where $E$ is the elliptic curve\vspace{-1.5mm}
\[
    E: v^2 = (c+2)u^3 - (3c-30)u^2 + (3c+30)u - (c-2)\vspace{-1.8mm}
\]
with the $j$-invariant\vspace{-1.4mm}
\[
    j(E) = 6912\frac{(2c-5)^3}{(c-2)(c+2)^3},\vspace{-0.9mm}
\]
for $c \in \bar{K}$ satisfying $c^2 = a^3 + 1/a^3 + 2$, and the degree of the isogeny is a product of powers of $2$ and $3$.\vspace{-1.3mm}
\end{Prop}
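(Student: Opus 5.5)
The statement to prove is Proposition~\ref{prop:Case2}: for $C: y^2=(x^3-1)(x^3-a^3)$, the Jacobian $J(C)$ is isogenous to $E^2$ via an isogeny whose degree involves only the primes $2$ and $3$. The plan is to reduce to Proposition~\ref{prop:Case1} by exhibiting a non-hyperelliptic involution of $C$, and then to use the order-$3$ automorphism $\rho:(x,y)\mapsto(\zeta_3x,y)$ to identify the two elliptic factors.

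\emph{Normalization.} Replace $x$ by $\sqrt{a}\,x$; the equation becomes $y^2=c_0(x^3-a^{-3/2})(x^3-a^{3/2})$. With $\lambda=a^{3/2}$, this is $y^2=c_0(x^6-(\lambda+\lambda^{-1})x^3+1)$. The map $\sigma:(x,y)\mapsto(1/x,\,y/x^3)$ is then an involution of $C$.

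\emph{Quotient curves.} Set $u=x+1/x$. Since $x^3+x^{-3}=u^3-3u$, the quotient $C/\langle\sigma\rangle$ is the elliptic curve
\[
E_1: w^2=c_0\bigl(u^3-3u-(\lambda+\lambda^{-1})\bigr), \qquad w=y/x^{3/2}.
\]
The companion quotient by $\sigma\iota_C$ gives a second elliptic curve $E_2$, cut out by the same cubic twisted by $u^2-4$. As in Proposition~\ref{prop:Case1}, the induced map $J(C)\to E_1\times E_2$ has degree a power of $2$; this follows because its kernel is killed by $1+\sigma$ and related endomorphisms.

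\emph{Identifying the factors.} The automorphism $\rho$ does not commute with $\sigma$, since $\rho\sigma\rho^{-1}=\sigma\rho^{-2}$. Hence the three involutions $\sigma,\sigma\rho,\sigma\rho^2$ are conjugate, and conjugation by $\rho$ maps the quotient by $\sigma\iota_C$ to the quotient by $\sigma\rho\,\iota_C$. I would show that $E_2$ is $3$-isogenous to $E_1$. The most explicit route is to compute both $j$-invariants as rational functions of $c$, where $c^2=a^3+a^{-3}+2=(\lambda+\lambda^{-1})^2/\!\ldots$ after normalization, and then check the modular relation $\Phi_3(j(E_1),j(E_2))=0$. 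A more conceptual alternative is that $J(C)$ carries an action of the group $S_3$ generated by $\rho$ and $\sigma$, through which $J(C)\sim E^2$ with $E$ the image of $1+\sigma$. The isotypic decomposition introduces denominators $|S_3|=6$, which explains why only the primes $2$ and $3$ occur in the degree.

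\emph{Matching the stated model.} Finally, the cubic for $E_1$ is transformed by a Möbius change of variable in $u$ into the stated model $(c+2)u^3-(3c-30)u^2+(3c+30)u-(c-2)$. I would then verify the formula $j=6912(2c-5)^3/((c-2)(c+2)^3)$ by direct computation.

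\emph{Main obstacle.} I expect the hardest step to be proving that the second factor is isogenous to $E_1$ with degree involving only $2$ and $3$. The first thing I would try is the group-theoretic argument: the composite $J(C)\to E_1\times E_1$ given by $(\pi_\sigma,\ \pi_\sigma\circ\rho)$ should have finite kernel annihilated by $6$.
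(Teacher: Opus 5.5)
The paper gives no proof of its own here; it only cites [FS, Proposition 4.2]. Your architecture is the natural one: the involution $\sigma\colon(x,y)\mapsto(1/x,y/x^3)$ on $y^2=c_0(x^6-\mu x^3+1)$, $\mu=\lambda+\lambda^{-1}$, plus the order-$3$ automorphism $\rho$. Indeed, the stated model really is $C/\langle\sigma\rangle$.

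However, your explicit quotient is wrong, and the argument breaks at the matching step. The function $w=y/x^{3/2}$ is not a rational function on $C$; a genuine $\sigma$-invariant differs from it by the factor $x^{1/2}+x^{-1/2}=\sqrt{u+2}$. So $w^2=c_0(u^3-3u-\mu)$ is not a quotient of $C$. Its $j$-invariant is $6912/(4-\mu^2)=6912/(4-c^2)$, not $6912(2c-5)^3/((c-2)(c+2)^3)$. At $c=0$ you get $1728$ instead of $54000$, and for $p\equiv1\pmod{12}$ these are ordinary curves with different CM fields, hence not even isogenous. Since a M\"obius change of $u$ preserves $j$, no such change can carry your cubic to the stated model. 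Likewise, ``the same cubic twisted by $u^2-4$'' is a quintic in $u$, so it defines a genus-$2$ curve, not an elliptic curve.

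To repair this, use $w=y(1+x^{-3})/(u-1)$. Since $x^3(1+x^{-3})^2=u^3-3u+2=(u-1)^2(u+2)$, you get $C/\langle\sigma\rangle\colon w^2=c_0(u+2)(u^3-3u-\mu)$ and $C/\langle\sigma\iota_C\rangle\colon w^2=c_0(u-2)(u^3-3u-\mu)$. Now move $u=-2$ to infinity: set $x=(1+s)/(1-s)$ (so $\sigma$ becomes $s\mapsto-s$), $U=s^2$, $v=y(1-s)^3$. This gives $v^2=c_0\bigl[(1+s)^6+(1-s)^6-\mu(1-s^2)^3\bigr]=c_0\bigl[(\mu+2)U^3-(3\mu-30)U^2+(3\mu+30)U-(\mu-2)\bigr]$, which is the stated model with $c=\mu$. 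Note that $c^2=(\lambda+\lambda^{-1})^2$ holds exactly; the other sign $c=-\mu$ yields $C/\langle\sigma\iota_C\rangle$ instead.

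The step you flagged as the main obstacle is then short. Since $C/\langle\rho\rangle$ has genus $0$, we have $1+\rho+\rho^2=0$ in $\mathrm{End}(J(C))$. Combined with $\sigma\rho\sigma=\rho^{-1}$, this gives $(1+\sigma)\rho(1+\sigma)=-(1+\sigma)$, hence $\pi_\sigma\rho\,\pi_\sigma^*=-1$ on $E_1$. Consequently $\phi=\pi_{\sigma\iota_C}\rho\,\pi_\sigma^*\colon E_1\to E_2$ satisfies $\hat\phi\phi=\pi_\sigma\rho^{-1}(1-\sigma)\rho\,\pi_\sigma^*=2-\pi_\sigma\rho\,\pi_\sigma^*=3$, so your $3$-isogeny claim is correct. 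Moreover, take $\Psi=(\pi_\sigma,\pi_\sigma\rho)\colon J(C)\to E_1^2$ and $\Theta=\pi_\sigma^*\circ\mathrm{pr}_1+\rho^{-1}\pi_\sigma^*\circ\mathrm{pr}_2$. Then $\Theta\Psi=2+\sigma+\rho\sigma=2-\rho^2\sigma$ and $(2+\rho^2\sigma)\Theta\Psi=3$. So $\Psi$ is an isogeny whose kernel is killed by $3$, and this works in characteristic $p$ with no representation theory needed.
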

\begin{Rmk}
Since the curve in Case 4 corresponds to the case $a^3 = -1$ in Proposition \ref{prop:Case2}, its Jacobian is isogenous to the square of the elliptic curve with $j$-invariant $54000$, which has CM by $\mathbb{Z}[\hspace{-0.1mm}\sqrt{-3}]$.
Similarly, the curve in Case 5 corresponds to the case $a^2 = -1$ in Proposition \ref{prop:Case3}, and thus its Jacobian is isogenous to the square of the elliptic curve with $j$-invariant $8000$, which has CM by $\mathbb{Z}[\hspace{-0.1mm}\sqrt{-2}]$.
\end{Rmk}

\subsection{Generalized Howe curves}\label{subsec:generalized}
Let $C_1$ and $C_2$ be hyperelliptic curves of genera $g_1$ and $g_2$ which share exactly $r$ common \hspace{-0.3mm}Weierstrass points; that is,\vspace{-3.9mm}
\begin{align*}
    C_1: y_1^2 &= (x-a_1)(x-a_2) \cdots (x-a_r)(x-b_{r+1}) \cdots (x-b_{2g_1+2}),\\[-1.3mm]
    C_2: y_2^2 &= (x-a_1)(x-a_2) \cdots (x-a_r)(x-c_{r+1}) \cdots (x-c_{2g_2+2}),\\[-6,5mm]
\end{align*}
where $a_i,b_i,c_i \in \mathbb{P}^1(\bar{K})$ are all distinct.
The desingularization $H$ of the fiber product $C_1 \times_{\mathbb{P}^1} C_2$ is referred to\\ as a \emph{generalized Howe curve}.
We recall the results by Katsura and Takashima \cite[Section 2]{KT24}
on the genus of a generalized Howe curve $H$ and a criterion for $H$ to be 
hyperelliptic:\vspace{-1.6mm}
\begin{Prop}\label{prop:genus}
With the above notation, the genus of $H$ is given by $2g_1+2g_2+1-r$.
Moreover, if the genus of $H$ is at least $4$, then $H$ is hyperelliptic if and only if $r = g_1+g_2+1$.
\end{Prop}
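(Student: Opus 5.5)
The genus statement follows from Riemann--Hurwitz for the composite map $H \to \mathbb{P}^1$, which is Galois with group $(\mathbb{Z}/2\mathbb{Z})^2$. The hyperelliptic criterion I will handle by looking at the three intermediate quotients of $H$, comparing with the known structure of hyperelliptic curves carrying a Klein four-group action.

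\textbf{The genus.} The function field of $H$ is $\bar K(x,y_1,y_2)$. Since the branch sets of $C_1$ and $C_2$ differ (we assume $2g_1+2>r$ or $2g_2+2>r$), $y_1y_2\notin \bar K(x)$, so this field has degree $4$ over $\bar K(x)$ with group $V_4=\langle\sigma_1,\sigma_2\rangle$, where $\sigma_i$ negates $y_i$. Its three quadratic subfields are $C_1$, $C_2$ and
\[
C_3: y_3^2=\prod_{i>r}(x-b_i)\prod_{j>r}(x-c_j), \qquad y_3=y_1y_2/\textstyle\prod_{i\le r}(x-a_i).
\]
The curve $C_3$ has $2g_1+2g_2+4-2r$ branch points, so $g_3=g_1+g_2+1-r$. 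Over each branch point of $\mathbb{P}^1$, exactly one of the three involutions is unramified (since $V_4$ is not cyclic, the inertia groups have order $2$). Riemann--Hurwitz for $H\to\mathbb{P}^1$ then gives $2g_H-2=4(-2)+2N$, where $N=2g_1+2g_2+4-r$ is the number of distinct branch points. Hence $g_H=2g_1+2g_2+1-r$. Equivalently, one can use Kani--Rosen: $g_H=g_1+g_2+g_3$.

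\textbf{Hyperelliptic criterion, sufficiency.} Suppose $r=g_1+g_2+1$. Then $g_3=0$, so $C_3\cong\mathbb{P}^1$, and $H\to C_3$ has degree $2$. Thus $H$ is hyperelliptic, and its genus is $g_1+g_2\ge 4$ by assumption.

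\textbf{Hyperelliptic criterion, necessity.} Suppose $H$ is hyperelliptic with $g_H\ge 4$, and let $\iota$ be its hyperelliptic involution. Since $\iota$ is central in $\mathrm{Aut}(H)$, it commutes with $V_4$. I will show that $\iota\in V_4$.

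If $\iota\notin V_4$, then $V_4$ injects into the reduced automorphism group acting on $H/\langle\iota\rangle\cong\mathbb{P}^1$. The group $G=\langle V_4,\iota\rangle\cong(\mathbb{Z}/2)^3$ acts on $H$. In that case every involution in $V_4$ has quotient whose genus is controlled by its fixed points. Using $g_1+g_2+g_3=g_H$ and Kani--Rosen applied to $G$, the Jacobian of $H$ would have to be covered by quotients by subgroups of $G$ containing $\iota$, and those quotients are rational. I expect this to force a contradiction, for example that $g_H$ is too small, or it may instead require counting fixed points: an involution commuting with $\iota$ on a genus-$g$ hyperelliptic curve has $0$, $2$, or $4$ fixed points on $H$ relative to $2g+2$. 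This step is the main obstacle, and I would first try the fixed-point count via Riemann--Hurwitz, which should give $g_H\le 3$.

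Once $\iota\in V_4$, one of $C_1,C_2,C_3$ equals $H/\langle\iota\rangle$ and so has genus $0$. Since the $C_i$ with $i=1,2$ are hyperelliptic curves of genus $g_i\ge 1$ (as assumed in this section), the genus-$0$ quotient must be $C_3$. That gives $g_3=0$, i.e.\ $r=g_1+g_2+1$.
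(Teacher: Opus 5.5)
The genus computation (Riemann--Hurwitz for the $V_4$-cover with $N=2g_1+2g_2+4-r$ branch points) and the sufficiency direction are correct. The paper itself gives no proof here; it only quotes Katsura--Takashima. There is, however, a genuine gap in the necessity direction. Its entire content is the claim $\iota\in V_4$, and you do not prove it: you say you ``expect'' a contradiction and that a fixed-point count ``should give'' $g_H\le 3$.

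The Kani--Rosen idea you sketch also points the wrong way. For $G=\langle\sigma_1,\sigma_2,\iota\rangle\cong(\mathbb{Z}/2\mathbb{Z})^3$ one has $g_H=\sum_K g(H/K)$, summed over the seven subgroups $K$ of order $4$. The quotients by $V_4$ and by the three $K\ni\iota$ are rational, so they contribute nothing; the $J(H)$-part does \emph{not} come from them. The remaining three subgroups give exactly $g_1,g_2,g_3$. So this decomposition merely reproduces $g_H=g_1+g_2+g_3$ and yields no contradiction without further input.

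The missing input is short, and your fixed-point hint is the right one. Suppose $\iota\notin V_4$, and take any $\sigma\in\{\sigma_1,\sigma_2,\sigma_1\sigma_2\}$.
\begin{itemize}
\item $\sigma$ is an involution, $\sigma\neq\iota$, and $\sigma$ commutes with $\iota$. It therefore induces an involution $\bar\sigma$ of $H/\langle\iota\rangle\cong\mathbb{P}^1$, which is nontrivial because the only automorphisms of $H$ over $H/\langle\iota\rangle$ are $1$ and $\iota$.
\item Since $p\neq 2$, $\bar\sigma$ has exactly two fixed points. Every fixed point of $\sigma$ lies over one of them, and each fibre has at most two points, so $\#\mathrm{Fix}(\sigma)\le 4$.
\item Riemann--Hurwitz for $H\to H/\langle\sigma\rangle$ gives $g(H/\langle\sigma\rangle)=(2g_H+2-\#\mathrm{Fix}(\sigma))/4\ge (g_H-1)/2$.
\end{itemize}
Summing over the three involutions gives $g_H=g_1+g_2+g_3\ge \tfrac32(g_H-1)$, that is, $g_H\le 3$, which contradicts $g_H\ge 4$.

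Equivalently, $\iota$ descends to each $C_i$ as a nontrivial involution with rational quotient. This involution differs from the hyperelliptic involution of $C_i$, which is the image of $V_4$. Uniqueness of the hyperelliptic involution in genus $\ge 2$ then forces $g_i\le 1$ for $i=1,2,3$, hence again $g_H\le 3$.

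With this step inserted, your final deduction is fine: the genus-$0$ quotient must be $C_3$ because $g_1,g_2\ge 1$, so $g_3=0$ and $r=g_1+g_2+1$.
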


Next, the hyperelliptic involutions of $C_1$ and $C_2$ induce the involutions of $H$ given by\vspace{-1.8mm}
\begin{align*}
    \sigma_1: (x,y_1,y_2) &\longmapsto (x,y_1,-y_2),\\[-1.1mm]
    \sigma_2: (x,y_1,y_2) &\longmapsto (x,-y_1,y_2),
\end{align*}
and we clearly have the isomorphisms $H/\langle\sigma_i\rangle \cong C_i$ for each $i \in \{1,2\}$.
The composition $\sigma_1\sigma_2$ defines another quotient map $H \rightarrow H/\langle \sigma_1\sigma_2\rangle \eqqcolon C_3$, where $C_3$ is the hyperelliptic curve given by\vspace{-1.2mm}
\[
    C_3: y_3^2 = (x-b_{r+1}) \cdots (x-b_{2g_1+2})(x-c_{r+1}) \cdots (x-c_{2g_2+2})
\]
with $y_3 = y_1y_2/(x-a_1)(x-a_2) \cdots (x-a_r)$.
We remark that the genus of $C_3$ is given by $g_1+g_2+1-r$, and thus the genus of $H$ equals the sum of the genera of $C_1,C_2$, and $C_3$.
An important property of a generalized Howe curve $H$ is that their Jacobians are isogenous to the product of the Jacobians of $C_1,C_2$, and $C_3$.\vspace{-1.4mm}
\begin{Thm}[cf. {\cite[Theorem 3]{KT24}}]\label{thm:decomposition}
With the above notation, there exists an isogeny\vspace{-1.4mm}
\[
    J(H) \longrightarrow J(C_1) \times J(C_2) \times J(C_3)\vspace{-0.7mm}
\]
whose degree is a power of 2.
In particular, the curve $H$ is superspecial if and only if $C_1,C_2$, and $C_3$ are all superspecial.
\end{Thm}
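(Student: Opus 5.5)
Theorem~\ref{thm:decomposition} is the statement to prove: $J(H)$ is isogenous, via a map of $2$-power degree, to $J(C_1)\times J(C_2)\times J(C_3)$, and $H$ is superspecial exactly when $C_1,C_2,C_3$ are. The plan is the standard one for curves with an action of $G=\langle\sigma_1,\sigma_2\rangle\cong(\mathbb{Z}/2\mathbb{Z})^2$. The group $G$ acts on $H$ with quotient $H/G\cong\mathbb{P}^1$, and the three intermediate quotients are $H/\langle\sigma_1\rangle\cong C_1$, $H/\langle\sigma_2\rangle\cong C_2$ and $H/\langle\sigma_1\sigma_2\rangle\cong C_3$.

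First we establish the genus identity $g(H)=g(C_1)+g(C_2)+g(C_3)$. The paper notes this using Proposition~\ref{prop:genus} and $g(C_3)=g_1+g_2+1-r$. One can also obtain it directly from Riemann--Hurwitz applied to the degree-$4$ cover $H\to\mathbb{P}^1$, counting ramification over the $a_i$, $b_i$ and $c_i$.

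Next, let $\pi_i:H\to C_i$ be the quotient maps. They induce homomorphisms $\pi_{i*}:J(H)\to J(C_i)$, and we set
\[
\Phi=(\pi_{1*},\pi_{2*},\pi_{3*}):J(H)\to J(C_1)\times J(C_2)\times J(C_3).
\]
We also consider the pullback map $\Psi=\pi_1^*+\pi_2^*+\pi_3^*$ in the other direction, and compute both composites.

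For $i\ne j$, the composite $\pi_{i*}\pi_j^*$ factors through $J(H/G)=J(\mathbb{P}^1)=0$, so it vanishes. Also $\pi_{i*}\pi_i^*=[2]$. Hence $\Phi\circ\Psi=[2]$.

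For the other composite, $\pi_i^*\pi_{i*}=1+\tau_i$, where $\tau_i$ runs over the nontrivial elements $\sigma_1,\sigma_2,\sigma_1\sigma_2$. Therefore
\[
\Psi\circ\Phi=3+\sigma_1+\sigma_2+\sigma_1\sigma_2=2+\textstyle\sum_{g\in G}g.
\]
The sum $\sum_{g\in G} g$ equals the pullback of the pushforward to $J(\mathbb{P}^1)=0$, so it is $0$. Thus $\Psi\circ\Phi=[2]$ as well.

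It follows that $\Phi$ has finite kernel contained in $J(H)[2]$, so it is an isogeny. Its degree divides $\deg[2]=2^{2g(H)}$ and is therefore a power of $2$. The dimensions match by the genus identity from the first step.

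The superspecial statement follows because $p>5$ is odd, so an isogeny of $2$-power degree is separable of degree prime to $p$. Such an isogeny induces an isomorphism of $p$-divisible groups, hence of the Dieudonné modules, and in particular of the $a$-numbers. Superspeciality of a curve is equivalent to its Jacobian having $a$-number equal to its dimension, which is Oort's theorem that this forces the Jacobian to be isomorphic to a product of supersingular elliptic curves. The $a$-number of a product is the sum of the $a$-numbers of the factors, and each $a(J(C_i))\le g(C_i)$. Hence $a(J(H))=g(H)$ holds if and only if $a(J(C_i))=g(C_i)$ for every $i$.

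The main obstacle is justifying the identity $\sum_{g\in G}g=0$ in $\mathrm{End}(J(H))$ and the vanishing of the cross terms $\pi_{i*}\pi_j^*$. The first reduces to the fact that $\sum_{g\in G} g^*D=\pi^*\pi_*D$ for the full quotient $\pi:H\to H/G$, which is a direct divisor computation. For the cross terms I would instead use the standard relation $\pi_{i*}\pi_j^*=\rho^*\rho'_*$ through $\mathbb{P}^1$, valid for a fibered square of Galois quotients. This requires checking that $H$ is the normalization of $C_i\times_{\mathbb{P}^1}C_j$ for $i\ne j$, which holds by construction.
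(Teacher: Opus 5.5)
Your argument is correct. The paper gives no proof of its own and simply cites \cite[Theorem~3]{KT24}; what you have written is the standard argument for a $(\mathbb{Z}/2\mathbb{Z})^2$-action with genus-$0$ total quotient. That is, $\Phi\circ\Psi=[2]$ and $\Psi\circ\Phi=2+\sum_{g\in G}g=[2]$, followed by invariance of $p$-torsion under prime-to-$p$ isogenies and additivity of $a$-numbers. Your first step is redundant, since $\Phi\circ\Psi=[2]$ already forces $\Psi$ to have finite kernel, so the two dimensions agree without a separate genus count.
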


\newpage
Now assume that $g_1=g_2=1$ and $r=1$.
In other words, let $E_1$ and $E_2$ be elliptic curves defined by\vspace{-0.7mm}
\begin{align}
    E_1: y^2 &= (x-a)(x-b_2)(x-b_3)(x-b_4),\label{eq:E1}\\[-1.5mm]
    E_2: y^2 &= (x-a)(x-c_2)(x-c_3)(x-c_4),\label{eq:E2}\\[-6.2mm]\nonumber
\end{align}
where $a,b_2,b_3,b_4,c_2,c_3,c_4 \in \mathbb{P}^1(\bar{K})$ are pairwise distinct, and define $H$ as the desingularization of $E_1 \times_{\mathbb{P}^1}\hspace{-0.2mm} E_2$.
Proposition~\ref{prop:genus} shows that $H$ has genus 4 and is non-hyperelliptic; \,such a generalized Howe curve $H$ is simply called a \emph{Howe curve}.
By Theorem~\ref{thm:decomposition}, a Howe curve $H$ is superspecial if and only if two elliptic curves $E_1,E_2$ are both supersingular and the genus-2 curve\vspace{-1.1mm}
\begin{equation}\label{eq:C3}
    C_3: y^2 = (x-b_2)(x-b_3)(x-b_4)(x-c_2)(x-c_3)(x-c_4)
\end{equation}
is superspecial.
Based on this property, Kudo, Harashita, and Howe~\cite{KHH} proposed an algorithm for generating superspecial Howe curves.
For completeness, we briefly outline their algorithm below:\vspace{-0.7mm}
\begin{enumerate}
\setlength{\leftskip}{22pt}
\item[{\it Step 1.}\,] \hspace{-0.2mm}Choose parameters $(b_2,b_3,b_4,c_2,c_3,c_4)$ such that the genus-2 curve \eqref{eq:C3} is superspecial.\vspace{-1.2mm}
\item[{\it Step 2.}\,] For each $a$ for which the elliptic curve \eqref{eq:E1} is supersingular, check whether the elliptic curve \eqref{eq:E2} is also supersingular.
If so, output $(a,b_2,b_3,b_4,c_2,c_3,c_4)$.\vspace{-1mm}
\item[{\it Step 3.}\,] If no such parameter $a$ is found, return to {\it Step 1} with another choice of $(b_2,b_3,b_4,c_2,c_3,c_4)$.
\end{enumerate}
Each output $(a,b_2,b_3,b_4,c_2,c_3,c_4)$ determines a superspecial Howe curve $H$.
Applying the above algorithm, they\hspace{-0.2mm} verified the existence of superspecial Howe curves for every characteristic $p$ with $7 < p < 20000$.
Their implementation is available at the URL cited in \cite[Section 6]{KHH}.

\vspace{-1mm}
\begin{Rmk}
The number of superspecial Howe curves in characteristic $p$ is expected to be $\varTheta(p^3)$.
Indeed, since the number of superspecial genus-2 curves
is approximately $\hspace{-0.1mm}p^3/2880$ as shown in \cite[Theorem 3.3]{IKO}, there are $\varTheta(p^3)$ possible choices of parameters $(b_2,b_3,b_4,c_2,c_3,c_4)$ in {\it Step 1}.
For a fixed $(b_2,b_3,b_4,c_2,c_3,c_4)$, there are approximately $p/2$ values of $a$ for which the elliptic curve \eqref{eq:E1} is supersingular, and the probability that the elliptic curve \eqref{eq:E2} is also supersingular is estimated to be about $1/(2p)$.
This heuristic explains why this algorithm is expected to produce many superspecial Howe curves for sufficiently large $p$.
\end{Rmk}

\section{Genus-4 curves with completely decomposable Jacobians}\label{sec:genus4}
\setcounter{equation}{0}
In this section, we construct a family of genus-4 curves whose Jacobians are completely decomposable, namely, isogenous to a product of elliptic curves.
More precisely, we restrict our attention to Howe curves such that the genus-2 curve \eqref{eq:C3} falls into Case 1 (or its specialization).
The main purpose of this section is to present an algorithm (Algorithm \ref{alg:genus4}) that generates superspecial curves among them.

For parameters $s,t \in \bar{K}\!\smallsetminus\! \{0,\pm1\}$ with $s^2 \neq t^2$, we consider two elliptic curves\vspace{-1.3mm}
\begin{align*}
    E_1: y^2 &= (x-1)(x-s)(x-t),\\[-1.4mm]
    E_2: y^2 &= (x+1)(x+s)(x+t).
\end{align*}
We denote by $X_{s,t}$ the desingularization of the fiber product $E_1 \times_{\mathbb{P}^1}\hspace{-0.4mm} E_2$, which is a Howe curve as discussed in Section \ref{subsec:generalized}.
In particular, the curve $X_{s,t}$ has genus 4 and is non-hyperelliptic.
Moreover, by Theorem~\ref{thm:decomposition}, the Jacobian of $X_{s,t}$ is isogenous to $E_1 \times E_2 \times J(C_3)$, where $C_3$ is the genus-2 curve defined by\vspace{-0.8mm}
\[
    C_3: y^2 = (x^2-1)(x^2-s^2)(x^2-t^2).
\]
Then, it follows from Proposition~\ref{prop:Case1} that the Jacobian of $C_3$ is isogenous to $E_3 \times E_4$, where\vspace{-0.6mm}
\begin{align}
    E_3: y^2 &= x(x-1)(x-\lambda_3), \quad \lambda_3 \coloneqq \frac{t^2-s^2}{1-s^2},\nonumber\\[-0.3mm]
    E_4: y^2 &= x(x-1)(x-\lambda_4), \quad \lambda_4 \coloneqq \frac{t^2-s^2}{t^2(1-s^2)},\label{eq:E4}\\[-6.1mm]\nonumber
\end{align}
and therefore the Jacobian of $X_{s,t}$ is completely decomposable.
Since $E_2$ is isomorphic to $E_1$, we obtain the following proposition:\vspace{-0.8mm}
\begin{Prop}\label{prop:genus4_decomposition}
With the above notation, there exists an isogeny $J(X_{s,t}) \rightarrow E_1^2 \times E_3 \times E_4$ whose degree is a power of $2$.
In particular, the curve $X_{s,t}$ is superspecial if and only if $E_1,E_3$, and $E_4$ are all supersingular.
\end{Prop}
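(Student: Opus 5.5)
The plan is to compose two isogenies, each of $2$-power degree, and then check that supersingularity is preserved under isogeny. First, $X_{s,t}$ is the Howe curve attached to $E_1$ and $E_2$. Their only common branch point on $\mathbb{P}^1$ is $\infty$, since $\{1,s,t\}\cap\{-1,-s,-t\}=\emptyset$ by $s,t\notin\{0,\pm1\}$ and $s^2\neq t^2$. So we are in the situation $g_1=g_2=1$, $r=1$ with $a=\infty$. Theorem~\ref{thm:decomposition} then gives an isogeny $J(X_{s,t})\to E_1\times E_2\times J(C_3)$ of $2$-power degree. Here $C_3$ is the genus-2 curve with the six remaining branch points $\pm1,\pm s,\pm t$, namely $y^2=(x^2-1)(x^2-s^2)(x^2-t^2)$.

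Next, we apply Proposition~\ref{prop:Case1} to $C_3$ with $(a,b)=(s,t)$. The hypotheses $s,t\notin\{0,\pm1\}$ and $s^2\neq t^2$ are exactly what that proposition requires. This yields a $2$-power isogeny $J(C_3)\to E_3\times E_4$ with $\lambda_3,\lambda_4$ as in \eqref{eq:E4}.

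Third, we identify $E_2$ with $E_1$. The substitution $(x,y)\mapsto(-x,\sqrt{-1}\,y)$ carries $E_2:y^2=(x+1)(x+s)(x+t)$ to $-y^2=(-x+1)(-x+s)(-x+t)=-(x-1)(x-s)(x-t)$, which is $E_1$. This is an isomorphism over $\bar K$, so it contributes degree $1$. The product of these maps, $\mathrm{id}_{E_1}\times\phi\times(\text{isogeny from }C_3)$, composed with the first isogeny, gives $J(X_{s,t})\to E_1^2\times E_3\times E_4$. Its degree is the product of the degrees, hence a power of $2$.

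For the final claim, superspeciality is stated for unpolarized abelian varieties up to isomorphism, while we only have an isogeny. By Theorem~\ref{thm:decomposition}, $X_{s,t}$ is superspecial iff $E_1$, $E_2$ and $C_3$ are, which here means iff $E_1$ and $C_3$ are. It remains to see that $C_3$ is superspecial iff $E_3$ and $E_4$ are supersingular. For this we use the standard facts below.
\begin{itemize}
\item Supersingularity is an isogeny invariant (it depends only on the $p$-divisible group up to isogeny, equivalently on Newton polygons).
\item An abelian variety of dimension $g\ge2$ is isogenous to a product of supersingular elliptic curves iff it is isomorphic to one. This is the theorem of Deligne, Ogus and Shioda: all products of $g\ge 2$ supersingular elliptic curves are isomorphic, and superspecial is equivalent to $a$-number $g$.
\item The isogeny here has degree prime to $p$, since $p>5$. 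It therefore induces an isomorphism on $p$-torsion group schemes and preserves the $a$-number.
\end{itemize}
Hence $J(C_3)$ is superspecial iff $E_3\times E_4$ is, i.e. iff $E_3$ and $E_4$ are both supersingular. The most delicate point is this passage from isogeny to isomorphism; the degree-prime-to-$p$ argument on $a$-numbers settles it.
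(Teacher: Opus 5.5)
Your argument is the same as the paper's. You apply Theorem~\ref{thm:decomposition} to the Howe curve with common branch point $\infty$, then Proposition~\ref{prop:Case1} with $(a,b)=(s,t)$, then use $E_2\cong E_1$. Your prime-to-$p$ $a$-number argument correctly supplies the ``in particular'' step, which the paper leaves implicit.

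One correction: the first sentence of your second bullet is false. Being isogenous to a product of supersingular elliptic curves only means being supersingular. For $g\ge 2$ there are supersingular abelian varieties that are not superspecial, e.g.\ Jacobians of supersingular genus-$2$ curves with $a$-number $1$. What you actually need, and use, is Oort's theorem that superspecial is equivalent to $a$-number $g$, combined with invariance of the $a$-number under an isogeny of degree prime to $p$. So delete that sentence; the rest of the argument stands.
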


\newpage
As a corollary, if the characteristic $p$ satisfies $p \equiv 5 \pmod{6}$, then there exists a superspecial 
$X_{s,t}$, which coincides with the one constructed by 
Kudo-Harashita-Howe~\cite[Theorem 1.1]{KHH}.\vspace{-0.8mm}
\begin{Cor}\label{cor:existence_genus4}
If $p \equiv 5 \pmod{6}$, then a superspecial curve $X_{s,t}$ of genus $4$ in characteristic $p$ exists.\vspace{-0.5mm}
\end{Cor}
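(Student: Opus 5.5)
The plan is to exhibit explicit parameters $s,t$ and check the three supersingularity conditions of Proposition~\ref{prop:genus4_decomposition} using the CM curves with $j=0$ and $j=54000$. Take $\omega\in\bar{K}$ to be a primitive cube root of unity and set $s=\omega$, $t=\omega^2$. First I will check the admissibility conditions:
\begin{itemize}
\item $s,t\notin\{0,\pm1\}$ holds because $p>5$, so $\omega\neq\pm1$.
\item $s^2=\omega^2\neq\omega=t^2$.
\item The six points $\pm1,\pm\omega,\pm\omega^2$ are the six distinct sixth roots of unity, so the branch points are pairwise distinct.
\end{itemize}
Hence $X_{s,t}$ is a well-defined Howe curve of genus $4$. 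If $p=5$ is meant to be included, it is excluded by the standing hypothesis $p>5$ and is in any case already covered by \cite{FGT}. So I will only treat $p>5$.

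Next, $E_1: y^2=(x-1)(x-\omega)(x-\omega^2)=x^3-1$ has $j$-invariant $0$, i.e.\ CM by $\mathbb{Z}[\omega]$. By Deuring's criterion, it is supersingular exactly when $p$ is inert in $\mathbb{Q}(\sqrt{-3})$, i.e.\ when $p\equiv 2\pmod 3$. This holds because $p\equiv 5\pmod 6$. (Equivalently, the coefficient of $x^{p-1}$ in $(x^3-1)^{(p-1)/2}$ vanishes since $3\nmid p-1$.)

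For $E_3$ and $E_4$, I will not compute $\lambda_3,\lambda_4$ directly. Instead I observe that
\[
C_3: y^2=(x^2-1)(x^2-\omega^2)(x^2-\omega^4)=x^6-1,
\]
which is the Case~4 curve. By the Remark following Proposition~\ref{prop:Case2}, $J(C_3)$ is isogenous to $E^2$ with $j(E)=54000$, a curve with CM by $\mathbb{Z}[\sqrt{-3}]$. This order lies in $\mathbb{Q}(\sqrt{-3})$, so $E$ is again supersingular for $p\equiv 2\pmod 3$. On the other hand, Proposition~\ref{prop:Case1} gives $J(C_3)\sim E_3\times E_4$. Thus $E_3\times E_4\sim E^2$ is supersingular, since supersingularity is an isogeny invariant. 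It follows that each factor $E_3$ and $E_4$ is supersingular; for instance, by Poincaré reducibility each $E_i$ is isogenous to $E$.

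Proposition~\ref{prop:genus4_decomposition} then shows that $X_{\omega,\omega^2}$ is superspecial. The only step needing any care is passing from $J(C_3)$ to the individual factors $E_3,E_4$. As a fallback, one could compute $\lambda_3=(\omega-\omega^2)/(1-\omega^2)$ explicitly and verify that its $j$-invariant is $54000$ (or $0$).
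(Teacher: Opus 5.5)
Your proof is correct, and its skeleton matches the paper's: the choice $s=\omega$, $t=\omega^2$, the reduction via Proposition~\ref{prop:genus4_decomposition}, and the observation that $E_1\colon y^2=x^3-1$ has $j=0$. The difference lies in how you handle $E_3$ and $E_4$.

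The paper simply computes $\lambda_3=1+\omega$ and $\lambda_4=1+\omega^2$. Both satisfy $\lambda^2-\lambda+1=0$, so both Legendre curves have $j$-invariant $0$. All three curves are therefore the same CM curve, and only the classical fact that $j=0$ is supersingular iff $p\equiv 2\pmod 3$ is needed.

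You instead recognize $C_3$ as the Case~4 curve $y^2=x^6-1$ and import the Remark ($J(C_3)\sim E^2$ with $j(E)=54000$). You then apply Deuring's criterion to the order $\mathbb{Z}[\sqrt{-3}]$. Finally you need Poincar\'e reducibility (or additivity of the $p$-rank) to push supersingularity down to the factors $E_3,E_4$. This avoids computing $\lambda_3,\lambda_4$, but it uses more machinery and only determines $E_3,E_4$ up to isogeny. The two pictures are consistent, since the curves with $j=0$ and $j=54000$ are $2$-isogenous.

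The step you flagged as needing care can be bypassed. With $a=\omega$ and $b=\omega^2$, the unnormalized models in Section~\ref{subsec:decomposition} become $E_\sigma\colon v^2=u^3-1$ and $E_\tau\colon v^2=u(u^3-1)$. Both have $j$-invariant $0$; for the second, substitute $u\mapsto 1/u$ to get $V^2=1-U^3$. This is exactly the paper's direct computation in a different guise.
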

\begin{proof}
We define $s \coloneqq \omega$ and $t \coloneqq \omega^2$, where $\omega$ denotes a primitive cube root of unity.
Then, the three elliptic curves $E_1,E_3$, and $E_4$ are given by\vspace{-0.8mm}
\begin{align*}
    E_1: y^2 &= (x-1)(x-\omega)(x-\omega^2),\\[-1.1mm]
    E_3: y^2 &= x(x-1)(x-1-\omega),\\[-1.1mm]
    E_4: y^2 &= x(x-1)(x-1-\omega^2).
\end{align*}
The $j$-invariants of these curves are all equal to $0$, and hence they are supersingular (cf. \cite[Example \hspace{-0.3mm}V.4.4]{Silverman}).
Therefore, it follows from Proposition~\ref{prop:genus4_decomposition} that the curve $X_{s,t}$ is superspecial.
\end{proof}

Next, the elliptic curve $E_1$ can be written in Legendre form as\vspace{-1.3mm}
\[
    E_1: y^2 = x(x-1)(x-\lambda_1), \quad \lambda_1 \coloneqq \frac{t-s}{1-s}\vspace{-0.6mm}
\]
by the Möbius transformation $x \mapsto (x-s)/(1-s)$.
Note that the non-singularity of $E_1,E_3$, and $E_4$ implies that $\lambda_1,\lambda_3,\lambda_4 \neq 0,1$.
Additionally, the parameters $\lambda_1$ and $\lambda_3$ satisfy the following conditions:\vspace{-0.5mm}
\begin{equation}\label{eq:lambda}
    \left\{\begin{array}{l}
        \lambda_1-\lambda_3 \neq 0,\\
        \lambda_1^2-\lambda_3 \neq 0,\\
        \lambda_1^2-2\lambda_1+\lambda_3 \neq 0,\\
        2\lambda_1\lambda_3 - \lambda_1^2-\lambda_3 \neq 0.
    \end{array}\right.\vspace{-0.2mm}
\end{equation}
Indeed, a straightforward computation yields\vspace{-1.4mm}
\begin{align*}
    \lambda_1-\lambda_3 &= \frac{(1-t)(t-s)}{(1-s)(1+s)},\nonumber\\
    \lambda_1^2-\lambda_3 &= -\frac{2s(1-t)(t-s)}{(1-s)^2(1+s)},\\
    \lambda_1^2-2\lambda_1+\lambda_3 &= -\frac{2(1-t)(t-s)}{(1-s)^2(1+s)},\\
    2\lambda_1\lambda_3 - \lambda_1^2-\lambda_3 &= -\frac{2t(1-t)(t-s)}{(1-s)^2(1+s)},\\[-5.8mm]
\end{align*}
which are all non-zero since $s,t \notin \{0,\pm1\}$ and $s^2 \neq t^2$.
By combining suitable expressions among the above, we can express $s$ and $t$ in terms of $\lambda_1$ and $\lambda_3$ explicitly:\vspace{-0.5mm}
\begin{equation}\label{eq:st_for_genus4}
    s = \frac{\lambda_1^2-\lambda_3}{\lambda_1^2-2\lambda_1+\lambda_3}, \quad t = \frac{2\lambda_1\lambda_3 - \lambda_1^2-\lambda_3}{\lambda_1^2-2\lambda_1+\lambda_3}.\vspace{-0.2mm}
\end{equation}
We now determine the constraints on $\lambda_1$ and $\lambda_3$ under which our assumptions imposed to ensure that $C_3$ is non-singular, namely $s,t \notin \{0,\pm1\}$ and $s^2 \neq t^2$, are satisfied.\vspace{-1mm}
\begin{Lem}
For any $\lambda_1,\lambda_3 \in \bar{K}\!\smallsetminus\!\{0,1\}$ satisfying the conditions \eqref{eq:lambda}, the corresponding values of $s$ and $t$ given by \eqref{eq:st_for_genus4} lie in $\bar{K}\!\smallsetminus\!\{0,\pm1\}$ with $s^2 \neq t^2$.\vspace{-0.6mm}
\end{Lem}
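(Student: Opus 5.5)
The plan is a direct computation. By the third condition in \eqref{eq:lambda}, the common denominator $D \coloneqq \lambda_1^2-2\lambda_1+\lambda_3$ in \eqref{eq:st_for_genus4} is nonzero, so $s$ and $t$ are well-defined elements of $\bar{K}$. Write $s = N_s/D$ and $t = N_t/D$ with $N_s = \lambda_1^2-\lambda_3$ and $N_t = 2\lambda_1\lambda_3-\lambda_1^2-\lambda_3$.

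Every quantity that must be shown nonzero is one of $s$, $t$, $1\pm s$, $1\pm t$, $s-t$, $s+t$. The last two suffice for $s^2 \neq t^2$, since $s^2-t^2=(s-t)(s+t)$. I will put each of these over the denominator $D$ and factor its numerator into factors that the hypotheses make nonzero: $2$ (nonzero since $p>5$), $\lambda_1$, $\lambda_3$, $\lambda_1-1$, $\lambda_3-1$, $\lambda_1-\lambda_3$, or the expressions in \eqref{eq:lambda}.

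First, $s\neq 0$ and $t\neq 0$ are exactly the second and fourth conditions of \eqref{eq:lambda}. For the rest, expanding gives
\begin{align*}
1-s &= \frac{D-N_s}{D} = \frac{-2(\lambda_1-\lambda_3)}{D}, &
1+s &= \frac{D+N_s}{D} = \frac{2\lambda_1(\lambda_1-1)}{D},\\
1-t &= \frac{D-N_t}{D} = \frac{2(\lambda_1-1)(\lambda_1-\lambda_3)}{D}, &
1+t &= \frac{D+N_t}{D} = \frac{2\lambda_1(\lambda_3-1)}{D},\\
s-t &= \frac{N_s-N_t}{D} = \frac{2\lambda_1(\lambda_1-\lambda_3)}{D}, &
s+t &= \frac{N_s+N_t}{D} = \frac{2\lambda_3(\lambda_1-1)}{D}.
\end{align*}
Every factor on the right-hand side is nonzero. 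This uses $\lambda_1,\lambda_3\notin\{0,1\}$, the first condition $\lambda_1\neq\lambda_3$ of \eqref{eq:lambda}, and $2\neq 0$. Hence $s,t\notin\{0,\pm1\}$ and $s^2\neq t^2$.

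The only real work is the polynomial expansion and factorization of these six numerators. It is routine; the step to double-check is $1-t$, whose numerator $2\lambda_1^2-2\lambda_1+2\lambda_3-2\lambda_1\lambda_3$ factors as $2(\lambda_1-1)(\lambda_1-\lambda_3)$ after grouping by $\lambda_1-1$.
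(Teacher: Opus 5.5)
Your proof is correct and follows essentially the same route as the paper: you use the third condition of \eqref{eq:lambda} for $D\neq 0$ and the second and fourth for $s,t\neq 0$. You then write $1\pm s$, $1\pm t$, $s\pm t$ over $D$ with numerators factored into visibly nonzero terms, and your six expansions match the paper's (up to the sign conventions $1-s$ versus $s-1$).
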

\begin{proof}
By \eqref{eq:lambda}, both the numerators and the denominator 
in \eqref{eq:st_for_genus4} are non-zero; hence $s$ and $t$ are well-defined 
and non-zero.
Also, a straightforward computation yields\vspace{-0.6mm}
\begin{alignat*}{3}
    s-1 &=\frac{2(\lambda_1-\lambda_3)}{\lambda_1^2-2\lambda_1+\lambda_3},& \quad
    s+1 &=\frac{2\lambda_1(\lambda_1-1)}{\lambda_1^2-2\lambda_1+\lambda_3},\\
    t-1 &=-\frac{2(\lambda_1-1)(\lambda_1-\lambda_3)}{\lambda_1^2-2\lambda_1+\lambda_3},& \quad
    t+1 &=\frac{2\lambda_1(\lambda_3-1)}{\lambda_1^2-2\lambda_1+\lambda_3},\\
    s-t &=\frac{2\lambda_1(\lambda_1-\lambda_3)}{\lambda_1^2-2\lambda_1+\lambda_3},& \quad
    s+t &=\frac{2\lambda_3(\lambda_1-1)}{\lambda_1^2-2\lambda_1+\lambda_3},\\[-5.7mm]
\end{alignat*}
and each of these values is non-zero.
This implies that $s \neq \pm 1$, $t \neq \pm 1$, and $s^2 \neq t^2$, as desired.
\end{proof}

\newpage
From the above discussion, for given $\lambda_1,\lambda_3 \in \bar{K}\!\smallsetminus\!\{0,1\}$ satisfying the conditions \eqref{eq:lambda}, our curve $X_{s,t}$ can be constructed via \eqref{eq:st_for_genus4}.
In particular, by choosing $\lambda_1$ and $\lambda_3$ so that $E_1$ and $E_3$ are supersingular, it follows from Proposition~\ref{prop:genus4_decomposition} that $X_{s,t}$ is superspecial if and only if $E_4$ is also supersingular.
Hence, we can produce superspecial $X_{s,t}$ as follows:\vspace{-1.1mm}
\begin{Alg}\label{alg:genus4}
\hspace{1mm}Input: A prime $p > 5$.\\
\hspace{25.7mm}Output: A pair $(s,t)$ such that $X_{s,t}$ is superspecial in characteristic $p$, or $\bot$.\vspace{-0.9mm}
\begin{enumerate}
\setlength{\leftskip}{22pt}
\item[{\it Step 1.}\,] \hspace{-0.2mm}Compute the set $\mathcal{T}$ of values $\lambda \neq 0,1$ for which the elliptic curve $y^2 = x(x-1)(x-\lambda)$ in Legendre form is supersingular in characteristic $p$.\vspace{-1.2mm}
\item[{\it Step 2.}\,] For each pair $(\lambda_1,\lambda_3) \in \mathcal{T}^2\hspace{-0.2mm}$ satisfying the conditions \eqref{eq:lambda}, check whether the elliptic curve \eqref{eq:E4} is\\ supersingular, where $(s,t)$ is given by \eqref{eq:st_for_genus4}. If so, output $(s,t)$.\vspace{-1mm}
\item[{\it Step 3.}\,] If no such $(\lambda_1,\lambda_3)$ is found, output $\bot$.\vspace{-0.5mm}
\end{enumerate}
\end{Alg}
\noindent We mention that, since $\mathcal{T} \hspace{-0.2mm}\subset \mathbb{F}_{p^2}\hspace{-0.3mm}$ by the result of Auer-Top~\cite[Proposition 2.2]{AT}, the outputs $s$ and $t$ lie in $\mathbb{F}_{p^2}$.\\
Also, it is well-known (cf. \cite[Theorem \hspace{-0.4mm}V.4.1]{Silverman}) that the cardinality of $\mathcal{T}$ computed in {\it Step 1}\hspace{0.3mm} equals $(p-1)/2$.
There are $\varTheta(p^2)$ possible choices of pairs $(\lambda_1,\lambda_3)$ in {\it Step 2}, and the probability that the elliptic curve \eqref{eq:E4} is\\ supersingular is estimated to be about $1/(2p)$ for each $(\lambda_1,\lambda_3)$.
Therefore, the expected number of superspecial $X_{s,t}$ in characteristic $p$ is $\varTheta(p)$.
This suggests that, for sufficiently large $p$, Algorithm~\ref{alg:genus4} will produce at\\ least one superspecial $X_{s,t}$. In fact, within the range of our experiments, such curves are found for all $p > 73$ (see Section~\ref{subsec:experiment_for_genus4} for details).\vspace{-1.1mm}
\begin{Rmk}
In our experiments, we first run Algorithm~\ref{alg:genus4} with $\mathcal{T}$ taken to be the set of $\lambda$ corresponding to supersingular elliptic curves whose $j$-invariants lie in $\mathbb{F}_p$.
If this search fails to produce a superspecial $X_{s,t}$,\\ we then rerun Algorithm~\ref{alg:genus4} with $\mathcal{T}$ as the full set of the $\lambda$ corresponding to all supersingular elliptic curves, following the original procedure.
For most $p$ tested in our experiments, this restricted choice of $\mathcal{T}$ suffices to yield at least one superspecial $X_{s,t}$, thereby significantly reducing the execution time required for {\it Step 1}.
\end{Rmk}

\section{Genus-5 curves with completely decomposable Jacobians}\label{sec:genus5}
\setcounter{equation}{0}
In this section, we construct a family of generalized Howe curves of genus 5, specifically those whose Jacobians are completely decomposable.
Subsequently, we give an algorithm (Algorithm \ref{alg:genus5}) for generating superspecial curves among them.

Consider two genus-2 curves $C_1$ and $C_2$ sharing exactly $r=4$\hspace{-0.1mm} Weierstrass points:\vspace{-1.2mm}
\begin{align}\label{eq:generalized_genus5}
    \begin{split}
    C_1: y^2 &= (x-a_1)(x-a_2)(x-a_3)(x-a_4)(x-b_1)(x-b_2),\\[-1.5mm]
    C_2: y^2 &= (x-a_1)(x-a_2)(x-a_3)(x-a_4)(x-c_1)(x-c_2),\\[-0.9mm]
    \end{split}
\end{align}
where $a_i,b_i,c_i \in \mathbb{P}^1(\bar{K})$ are all distinct.
The desingularization of the fiber product $C_1 \times_{\mathbb{P}^1}\hspace{-0.1mm} C_2$ is a generalized Howe curve, whose genus is $5$ according to Proposition~\ref{prop:genus}.
In the following, we focus on the case where both genus-2 curves $C_1$ and $C_2$ fall into Case~3 (or its specialization).
Specifically, for parameters $s,t \in \bar{K}\!\smallsetminus\!\{0,\pm1\}$ with $s^2 \hspace{-0.2mm}\neq t^2$, let $C_1$ and $C_2$ be genus-2 curves defined by\vspace{-1.3mm}
\begin{align*}
    C_1: y^2 &= x(x^2-1)(x^2-s^2),\\[-1.4mm]
    C_2: y^2 &= x(x^2-1)(x^2-t^2).\\[-5.8mm]
\end{align*}
We denote by $Y_{s,t}$ the generalized Howe curve constructed from these curves.
By Theorem~\ref{thm:decomposition}, the Jacobian of $Y_{s,t}$ is isogenous to $J(C_1) \hspace{-0.1mm}\times\hspace{-0.1mm} J(C_2) \hspace{-0.1mm}\times\hspace{-0.1mm} E_3$, where $E_3$ is the elliptic curve defined by\vspace{-1.2mm}
\begin{equation}\label{eq:E3_for_genus5}
    E_3: y^2 = (x^2-s^2)(x^2-t^2).\vspace{-0.2mm}
\end{equation}
According to Proposition~\ref{prop:Case3}, the Jacobian of each $C_i$ is isogenous to $E_i^2$, where\vspace{-1.3mm}
\[
    E_i: y^2 = (c_i+2)x^3 - (3c_i-10)x^2 + (3c_i-10)x - (c_i+2)
\]
with $c_1 \coloneqq s + 1/s$ and $c_2 \coloneqq t + 1/t$ (we note that these values satisfy $c_1^2 = s^2 + 1/s^2 + 2$ and $c_2^2 = t^2 + 1/t^2 + 2$).
Therefore, the Jacobian of $Y_{s,t}$ is completely decomposable:\vspace{-1.2mm}
\begin{Prop}\label{prop:genus5_decomposition}
With the above notation, there exists an isogeny $\hspace{-0.2mm}J(Y_{s,t}) \rightarrow E_1^2 \times E_2^2 \times E_3$ whose degree is a power of 2.
In particular, the curve $Y_{s,t}$ is superspecial if and only if $E_1,E_2$, and $E_3$ are all supersingular.
\end{Prop}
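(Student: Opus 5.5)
The plan is to compose the isogenies already available from the earlier results and then pass to supersingularity.

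First, apply Theorem~\ref{thm:decomposition} to the generalized Howe curve $Y_{s,t}$ built from $C_1$ and $C_2$. Before doing so, check the hypotheses: the two curves share exactly the four Weierstrass points $0,\pm1,\infty$, since both have odd degree $5$. The remaining points $\pm s$ and $\pm t$ are pairwise distinct and distinct from $0,\pm1,\infty$, because $s,t\notin\{0,\pm1\}$ and $s^2\neq t^2$. The theorem then gives a $2$-power isogeny $J(Y_{s,t})\to J(C_1)\times J(C_2)\times J(C_3)$. Here $C_3$ has defining polynomial $(x^2-s^2)(x^2-t^2)$, which is exactly $E_3$ in \eqref{eq:E3_for_genus5}. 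It is a smooth genus-one curve; it has a rational point at infinity because the leading coefficient is $1$, so it is an elliptic curve.

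Next, apply Proposition~\ref{prop:Case3} to each $C_i$, with $a=s$ (resp.\ $a=t$). We may choose $c_1=s+1/s$, since $c_1^2=s^2+1/s^2+2$; similarly $c_2=t+1/t$. This yields $2$-power isogenies $J(C_i)\to E_i^2$.

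Taking the product of these isogenies with the identity on $E_3$ and composing with the first isogeny gives an isogeny $J(Y_{s,t})\to E_1^2\times E_2^2\times E_3$. Its degree is the product of the degrees, hence a power of $2$.

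For the ``in particular'' part, use two standard facts:
\begin{itemize}
\item supersingularity of an abelian variety is an isogeny invariant;
\item by Oort's theorem, a supersingular abelian variety of dimension $\geq 2$ is superspecial exactly when its $a$-number equals $g$.
\end{itemize}
Since $p>5$ is odd, an isogeny of $2$-power degree has kernel of order prime to $p$, so it is étale at $p$ and induces an isomorphism of the $p$-torsion group schemes. Hence the $a$-number is preserved. So $J(Y_{s,t})$ is superspecial if and only if $E_1^2\times E_2^2\times E_3$ is superspecial. That product is superspecial if and only if each factor is supersingular, because the $a$-number is additive over products and an elliptic curve has $a=1$ exactly when it is supersingular. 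Alternatively, the equivalence can be read off directly from the ``in particular'' clause of Theorem~\ref{thm:decomposition}, together with the fact that the same argument applied to Proposition~\ref{prop:Case3} shows that $C_i$ is superspecial if and only if $E_i$ is supersingular.

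The only delicate step is this last point: that a $2$-power isogeny preserves superspeciality itself, and not merely supersingularity. The argument above via preservation of $J[p]$ handles it, and it is implicitly the same reasoning the paper uses in Proposition~\ref{prop:genus4_decomposition}.
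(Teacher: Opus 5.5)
Your proposal is correct and follows the paper's route: compose the $2$-power isogeny $J(Y_{s,t}) \to J(C_1)\times J(C_2)\times E_3$ from Theorem~\ref{thm:decomposition} with the $2$-power isogenies $J(C_i)\to E_i^2$ from Proposition~\ref{prop:Case3}, taking $c_1 = s+1/s$ and $c_2 = t+1/t$. Your check of the hypotheses (the common Weierstrass points $0,\pm1,\infty$) and your $p$-torsion/$a$-number argument for the superspecial equivalence spell out what the paper leaves implicit in the ``in particular'' clause of Theorem~\ref{thm:decomposition}.
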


\newpage
Here, we define the polynomial $h(\beta)$ by\vspace{-1.5mm}
\begin{equation}\label{eq:h(x)}
    h(\beta) \coloneqq \sum_{i=0}^{\lfloor p/4 \rfloor} \binom{(p-1)/2}{\lfloor(p+1)/4\rfloor+i}\hspace{-0.5mm}\binom{(p-1)/2}{i}\beta^i.\vspace{-0.9mm}
\end{equation}
According to \cite[Proposition 1.9]{IKO}, the two genus-2 curves $C_1$ and $C_2$ are superspecial if and only if $h(s^2) = 0$ and $h(t^2) = 0$, respectively.
Hence, provided that $s^2$ and $t^2$ are roots of the polynomial \eqref{eq:h(x)}, it follows from Theorem~\ref{thm:decomposition} that our curve $Y_{s,t}$ is superspecial if and only if the elliptic curve $E_3$ is supersingular.
Based on this property, one can construct a {\it naive} algorithm to generate superspecial $Y_{s,t}$ as follows:\vspace{-1.1mm}
\begin{Alg}\label{alg:pre-genus5}
\hspace{1mm}Input: A prime $p > 5$.\\
\hspace{25.7mm}Output: A pair $(s,t)$ such that $Y_{s,t}$ is superspecial in characteristic $p$, or $\bot$.\vspace{-1.1mm}
\begin{enumerate}
\setlength{\leftskip}{22pt}
\item[{\it Step 1.}\,] \hspace{-0.2mm}Compute the set $\mathcal{T}$ of roots $\beta \neq 0,1$ of the polynomial \eqref{eq:h(x)}.\vspace{-1.2mm}
\item[{\it Step 2.}\,] For each pair $(\beta_1,\beta_2) \in \mathcal{T}^2$ with $\beta_1 \hspace{-0.1mm}\neq \beta_2$, check whether the elliptic curve $y^2 = (x^2-\beta_1)(x^2-\beta_2)$ is supersingular.
If so, output any $(s,t)$ with $s^2 = \beta_1$ and $t^2 = \beta_2$.\vspace{-1.1mm}
\item[{\it Step 3.}\,] If no such $(\beta_1,\beta_2)$ is found, output $\bot$.\vspace{-0.5mm}
\end{enumerate}
\end{Alg}
\noindent We mention that, by the author's result \cite[Main Theorem A]{Ohashi}, the superspeciality of $C_1$ and $C_2$ implies that the outputs $s$ and $t$ lie in $\mathbb{F}_{p^2}\hspace{-0.2mm}$.
Also, it is known from \cite[Proposition 1.14]{IKO} that the cardinality of $\mathcal{T}$ computed in {\it Step 1} equals $\lfloor p/4 \rfloor$.
There are $\varTheta(p^2)$ possible choices of pairs $(\beta_1,\beta_2)$ in {\it Step 2}, and the probability that the elliptic curve $E_3: y^2 = (x^2-\beta_1)(x^2-\beta_2)$ is supersingular can be estimated to be approximately $1/(2p)$ for each $(\beta_1,\beta_2)$.
Therefore, the expected number of superspecial $Y_{s,t}$ in characteristic $p$ is $\varTheta(p)$.

However, finding the roots of the polynomial $h(\beta)$ is computationally expensive in practice, and therefore Algorithm \ref{alg:pre-genus5} above is not very efficient.
Instead, we explore a way to determine $s$ and $t$ from the $j$-invariants of $E_1$ and $E_2$.
We recall from Proposition~\ref{prop:Case3} that the $j$-invariants of $E_1$ and $E_2$ are given by\vspace{-1.4mm}
\begin{align*}
    j(E_1) = 64\frac{(3c_1-10)^3}{(c_1-2)(c_1+2)^2} &= 64\frac{(3s-1)^3(s-3)^3}{(s-1)^2(s+1)^4},\\[-0.2mm]
    j(E_2) = 64\frac{(3c_2-10)^3}{(c_2-2)(c_2+2)^2} &= 64\frac{(3t-1)^3(t-3)^3}{(t-1)^2(t+1)^4}.\\[-6.7mm]
\end{align*}
Consequently, for given $j_1,j_2 \in \bar{K}$, our curve $Y_{s,t}$ can be constructed by solving the equations\vspace{-1.6mm}
\begin{align}\label{eq:st_for_genus5}
    \begin{split}
        64(3s-1)^3(s-3)^3 - j_1(s-1)^2(s+1)^4 &= 0,\\[-1.4mm]
        64(3t-1)^3(t-3)^3 - j_2(t-1)^2(t+1)^4 &= 0\\[-0.7mm]
    \end{split}
\end{align}
for $s$ and $t$, provided that $s,t \notin\hspace{-0.1mm} \{0,\pm1\}$ and $s^2 \hspace{-0.2mm}\neq t^2$ holds.
In particular, by choosing $j_1$ and $j_2$ as supersingular $j$-invariants, it follows from Proposition~\ref{prop:genus5_decomposition} that $Y_{s,t}$ is superspecial if and only if $E_3$ is supersingular.
Hence, we can produce superspecial $Y_{s,t}$ as follows:\vspace{-1.3mm}
\begin{Alg}\label{alg:genus5}
\hspace{1mm}Input: A prime $p > 5$.\\
\hspace{25.7mm}Output: A pair $(s,t)$ such that $Y_{s,t}$ is superspecial in characteristic $p$, or $\bot$.\vspace{-1.1mm}
\begin{enumerate}
\setlength{\leftskip}{22pt}
\item[{\it Step 1.}\,] \hspace{-0.2mm}Compute the set $\mathcal{S}$ of supersingular $j$-invariants in characteristic $p$.\vspace{-1.3mm}
\item[{\it Step 2.}\,] For each pair $(j_1,j_2) \in \mathcal{S}^2$, solve the equations \eqref{eq:st_for_genus5}, and check whether the elliptic curve \eqref{eq:E3_for_genus5} is supersingular for each solution $(s,t)$ satisfying $s,t \notin\hspace{-0.1mm} \{0,\pm1\}$ and $s^2 \hspace{-0.2mm}\neq t^2$.
If so, output $(s,t)$.\vspace{-1.1mm}
\item[{\it Step 3.}\,] If no such $(s,t)$ is found, output $\bot$.\vspace{-0.5mm}
\end{enumerate}
\end{Alg}
\noindent As discussed above, the expected number of superspecial $Y_{s,t}$ is $\varTheta(p)$, which suggests that Algorithm \ref{alg:genus5} will yield at least one such $Y_{s,t}$ for sufficiently large $p$.
In fact, within the range of our experiments, such a curve is found for all $p > 137$ (see Section~\ref{subsec:experiment_for_genus5} for details).

\section{Genus-6 curves with completely decomposable Jacobians}\label{sec:genus6}
\setcounter{equation}{0}
In this section, we construct a family of generalized Howe curves of genus 6, specifically those whose Jacobians are completely decomposable.
Subsequently, we give an algorithm (Algorithm \ref{alg:genus6}) for generating superspecial curves among them.

Consider two genus-2 curves $C_1$ and $C_2$ sharing exactly $r=3$\hspace{-0.1mm} Weierstrass points:\vspace{-1.4mm}
\begin{align}\label{eq:generalized_genus6}
    \begin{split}
    C_1: y^2 &= (x-a_1)(x-a_2)(x-a_3)(x-b_1)(x-b_2)(x-b_3),\\[-1.5mm]
    C_2: y^2 &= (x-a_1)(x-a_2)(x-a_3)(x-c_1)(x-c_2)(x-c_3),\\[-0.9mm]
    \end{split}
\end{align}
where $a_i,b_i,c_i \in \mathbb{P}^1(\bar{K})$ are all distinct.
The desingularization of the fiber product $C_1 \times_{\mathbb{P}^1}\hspace{-0.1mm} C_2$ is a generalized Howe curve, whose genus is $6$ according to Proposition~\ref{prop:genus}.
In the following, we focus on the case where both genus-2 curves $C_1$ and $C_2$ fall into Case~2 (or its specialization).
More explicitly, for parameters $s,t \in \bar{K}\hspace{-0.3mm}\smallsetminus\hspace{-0.3mm}\{0\}$ such that $1,s^6,t^6$ are pairwise distinct, let $C_1$ and $C_2$ be genus-2 curves defined by\vspace{-1.4mm}
\begin{align*}
        C_1: y^2 &= (x^3-1)(x^3-s^6),\\[-1.5mm]
        C_2: y^2 &= (x^3-1)(x^3-t^6).\\[-6mm]
\end{align*}
We denote by $Z_{s,t}$ the generalized Howe curve constructed from these curves.
By Theorem~\ref{thm:decomposition}, the Jacobian of $Z_{s,t}$ is isogenous to $J(C_1) \hspace{-0.1mm}\times\hspace{-0.1mm} J(C_2) \hspace{-0.1mm}\times\hspace{-0.1mm} J(C_3)$, where $C_3$ is the third genus-2 curve defined by\vspace{-1.3mm}
\[
    C_3: y^2 = (x^3-s^6)(x^3-t^6),\vspace{0.2mm}
\]
which is isomorphic to the genus-2 curve $y^2 = (x^3-1)(x^3-s^6/t^6)$ via the  M\"{o}bius transformation $x \mapsto x/t^2$.
Here, we prepare the following lemma:\vspace{-1.2mm}
\begin{Lem}\label{lem:genus6-Fp2}
If $Z_{s,t}$ is superspecial, then both $s$ and $t$ belong to $\mathbb{F}_{p^2}$.\vspace{-0.8mm}
\end{Lem}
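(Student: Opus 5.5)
The plan is to first show that the parameters $s^2$ and $t^2$ lie in $\mathbb{F}_{p^2}$, and then to descend from $s^2$ to $s$. By Theorem~\ref{thm:decomposition}, superspeciality of $Z_{s,t}$ implies that $C_1$, $C_2$ and $C_3$ are all superspecial. The lemma is symmetric in $s$ and $t$, so it suffices to treat $C_1: y^2 = (x^3-1)(x^3-s^6)$.

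For the first step I would use the standard fact, also used in \cite{Ohashi}, that a superspecial curve $C$ has a model over $\mathbb{F}_{p^2}$ on which the $p^2$-Frobenius acts on $J(C)$ as $-p$. On that model every $2$-torsion point is rational, and hence so is every Weierstrass point (they are determined by the $2$-torsion, which is generated by differences of Weierstrass points). Consequently the cross-ratio of any four Weierstrass points lies in $\mathbb{F}_{p^2}$, since cross-ratios are invariant under the $\bar{K}$-isomorphism to that model. The Weierstrass points of $C_1$ are $1,\omega,\omega^2$ and $s^2,s^2\omega,s^2\omega^2$. The Möbius transformation sending $1,\omega,\omega^2$ to $0,1,\infty$ has coefficients in $\mathbb{F}_p(\omega)\subset\mathbb{F}_{p^2}$. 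Its value at $s^2$ is the cross-ratio $(1,\omega;\omega^2,s^2)\in\mathbb{F}_{p^2}$, so inverting it gives $s^2\in\mathbb{F}_{p^2}$. In the same way $t^2\in\mathbb{F}_{p^2}$, which is also consistent with $C_3$ giving $(s/t)^2\in\mathbb{F}_{p^2}$. Alternatively this step can be quoted directly from \cite[Main Theorem A]{Ohashi}.

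For the second step, since $\gcd(2,3)=1$, it suffices to show $s^3\in\mathbb{F}_{p^2}$; together with $s^2$ this gives $s = s^3/s^2\in\mathbb{F}_{p^2}$. I would get $s^3$ from Proposition~\ref{prop:Case2} with $a = s^2$. There one may take $c = s^3 + s^{-3}$, since $c^2 = s^6+s^{-6}+2$. The elliptic curve $E$ and the degree $2^a3^b$ maps $C_1\to E$ underlying the isogeny are defined over $\mathbb{F}_{p^2}(c)$. The coordinates of the $3$-torsion and $2$-torsion points involved, such as the roots of the cubic defining $E$, are rational functions of $c$.

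The argument would then run as follows. On the $\mathbb{F}_{p^2}$-model of $C_1$, which is maximal or minimal, every morphism to an elliptic curve and all of $\mathrm{End}(J(C_1))$ are defined over $\mathbb{F}_{p^2}$, because Frobenius is the central element $-p$. In particular the quotient $C_1\to C_1/\langle\text{involution}\rangle$ realizing $E$ is defined over $\mathbb{F}_{p^2}$. Identifying that quotient's branch data in the original coordinates should force $c\in\mathbb{F}_{p^2}$. Then $s^3$ is a root of $X^2 - cX + 1$ over $\mathbb{F}_{p^2}$. I would also need to check, using $s^6\in\mathbb{F}_{p^2}$ and $c\in\mathbb{F}_{p^2}$, that this root itself lies in $\mathbb{F}_{p^2}$ rather than in a quadratic extension. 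For this I would use that $s^3 - s^{-3}$ squares to $c^2-4$ and appears as a rational invariant, for instance in the defining equation of the second elliptic factor.

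I expect this descent from $s^2$ to $s$ to be the main obstacle, since $C_1$ itself only depends on $s^6$. If the argument via $c$ does not close, the fallback is to work directly on $Z_{s,t}$. Its function field $K(x,y_1,y_2)$ contains further rational functions, for example those coming from the $S_3$ reduced automorphisms $x\mapsto s^2/x$ composed with $x\mapsto \omega x$, whose coefficients involve $s$ itself rather than $s^2$. Rationality over $\mathbb{F}_{p^2}$ of all automorphisms of a superspecial curve on its $-p$ model would then pin down $s$.
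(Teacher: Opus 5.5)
Your first step is fine and is essentially the paper's first step. Rationality of the Weierstrass points on a model with Frobenius $-p$ puts the cross-ratios in $\mathbb{F}_{p^2}$, giving $s^2\in\mathbb{F}_{p^2}$; the paper reads this off as $s^2=(\omega^2+\lambda_3)/(\omega+\lambda_3)$.

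The gap is the second step, which is the whole content of the lemma. Given $s^2\in\mathbb{F}_{p^2}$, the claim $s\in\mathbb{F}_{p^2}$ is equivalent to $s^2$ being a square in $\mathbb{F}_{p^2}$. It is also equivalent to $s^3\in\mathbb{F}_{p^2}$, so your reduction to $s^3$ gains nothing. This square-class information cannot come from Weierstrass-point ($2$-torsion) rationality, because every cross-ratio of $1,\omega,\omega^2,s^2,s^2\omega,s^2\omega^2$ lies in $\mathbb{F}_p(\omega,s^2)$. Your proposal never supplies another source for it: ``identifying that quotient's branch data should force $c\in\mathbb{F}_{p^2}$'' is not an argument.

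Rationality of automorphisms and quotient maps on the $-p$ model transfers to your coordinates only through $\bar{K}$-invariants, because $y^2=(x^3-1)(x^3-s^6)$ may be a nontrivial twist of that model. So a coefficient like the $s^3$ in the lift $(x,y)\mapsto(s^2/x,\,s^3y/x^3)$ is not controlled. Likewise, the fixed points $\pm s$ of an $\mathbb{F}_{p^2}$-rational involution of $\mathbb{P}^1$ need not be $\mathbb{F}_{p^2}$-rational. Your fallback has the same hole. The paper fills it with the finer half of \cite[Main Theorem A]{Ohashi}: $(1-\lambda_1)\lambda_2$ and $\lambda_1-\lambda_2$ are squares in $\mathbb{F}_{p^2}$. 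Hence $s^2=\tfrac13(s^2-1)^2\cdot\frac{\lambda_1-\lambda_2}{(1-\lambda_1)\lambda_2}$ is a square, since $3\in\mathbb{F}_p$ is a square in $\mathbb{F}_{p^2}$.

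Your $c$-route can be closed with a concrete $\bar{K}$-invariant, namely $j(E)$. Since $E$ is supersingular, $j(E)\in\mathbb{F}_{p^2}$, and $c^2=s^6+s^{-6}+2\in\mathbb{F}_{p^2}$ by your first step. Clearing denominators in the formula of Proposition~\ref{prop:Case2} gives
\[
\frac{(c^2-4)^3\,j(E)}{6912}=(2c-5)^3(c-2)^2=A(c^2)+2c\,(4c^2+11)(c^2+50),\qquad A(u)=-92u^2-965u-500.
\]
So $c\in\mathbb{F}_{p^2}$ unless $c^2\in\{-11/4,-50\}\subset\mathbb{F}_p$, and in that case $c\in\mathbb{F}_{p^2}$ anyway because elements of $\mathbb{F}_p$ are squares in $\mathbb{F}_{p^2}$. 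If $c\neq0$, then $s^3=(s^6+1)/c\in\mathbb{F}_{p^2}$. If $c=0$, then $s^{12}=1$ with $12\mid p^2-1$, so again $s\in\mathbb{F}_{p^2}$. No discriminant check is needed. This would be a valid alternative that replaces Ohashi's squareness statement by the fact that supersingular $j$-invariants lie in $\mathbb{F}_{p^2}$, but it is not in your proposal as written.
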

\begin{proof}
Applying the M\"{o}bius transformation $x \mapsto\hspace{-0.2mm} (1-\omega x)/(x-\omega)$ maps the genus-2 curve $C_1$ to its Rosenhain form $y^2 = x(x-1)(x-\lambda_1)(x-\lambda_2)(x-\lambda_3)$ with\vspace{-1.3mm}
\[
    \lambda_1 \coloneqq -\omega\frac{s^2-\omega^2}{s^2-\omega}, \quad \lambda_2 \coloneqq -\omega\frac{s^2-1}{s^2-\omega^2}, \quad \lambda_3 \coloneqq -\omega\frac{s^2-\omega}{s^2-1},\vspace{-1.5mm}
\]
where $\omega$ denotes a primitive cube root of unity (we note that $\omega$ lies in $\mathbb{F}_{p^2}$, since $\omega^{p^2} \hspace{-0.7mm}= \omega \cdot (\omega^3)^{(p^2-1)/3} = \omega$).
By Theorem~\ref{thm:decomposition}, if $Z_{s,t}$ is superspecial, then $C_1$ is also superspecial, which implies that $\lambda_1,\lambda_2$, and $\lambda_3$ all lie in $\mathbb{F}_{p^2}\hspace{-0.5mm}$ by \cite[Main \hspace{-0.2mm}Theorem \hspace{-0.2mm}A]{Ohashi}.
Hence, a direct computation shows that\vspace{-1.7mm}
\[
    s^2 = \frac{\omega^2\hspace{-0.5mm}+\hspace{-0.1mm}\lambda_3}{\omega+\lambda_3} \in \mathbb{F}_{p^2}.\vspace{-0.4mm}
\]
In addition, one can verify that\vspace{-1.5mm}
\begin{align*}
    (1-\lambda_1)\lambda_2 &= \biggl(-\omega^2\frac{s^2-1}{s^2-\omega}\biggr)\hspace{-0.5mm}\biggl(-\omega\frac{s^2-1}{s^2-\omega^2}\biggr) = \frac{(s^2-1)^2}{(s^2-\omega)(s^2-\omega^2)},\\
    \lambda_1-\lambda_2 &= \frac{3s^2}{(s^2-\omega)(s^2-\omega^2)},\\[-5.9mm]
\end{align*}
both of which are squares in $\mathbb{F}_{p^2}\hspace{-0.2mm}$, again by \cite[Main \hspace{-0.2mm}Theorem \hspace{-0.2mm}A]{Ohashi}.
It follows from these equations that\vspace{-1.4mm}
\[
    s^2 = \frac{1}{3}(s^2-1)^2 \cdot \frac{\lambda_1-\lambda_2}{(1-\lambda_1)\lambda_2}.\vspace{-0.7mm}
\]
Since the right-hand side is a square in $\mathbb{F}_{p^2}$,
we conclude that $s \in \mathbb{F}_{p^2}$.
Following the same procedure for $C_2$, we obtain $t \in \mathbb{F}_{p^2}\hspace{-0.2mm}$, which completes the proof.
\end{proof}

According to Proposition~\ref{prop:Case2}, the Jacobian of each $C_i$ is isogenous to $E_i^2$, where\vspace{-1.6mm}
\[
    E_i: y^2 = (c_i+2)x^3 - (3c_i-30)x^2 + (3c_i+30)x - (c_i-2)
\]
with $c_1 \coloneqq s^3 + 1/s^3, \,c_2 \coloneqq t^3 + 1/t^3$, and $c_3 \coloneqq s^3/t^3 + t^3/s^3$ \hspace{-0.2mm}(this is why we define $C_1$ and $C_2$ using $s^6$ and $t^6$\\ instead of $s^3$ and $t^3$; \hspace{0.3mm}to ensure that each $c_i$ is a rational function in $s$ and $t$).
Therefore, the Jacobian of $Z_{s,t}$ is completely decomposable:\vspace{-1.4mm}
\begin{Prop}\label{prop:genus6_decomposition}
With the above notation, there exists an isogeny $\hspace{-0.2mm}J(Z_{s,t}) \rightarrow E_1^2 \times E_2^2 \times E_3^2$ whose degree is\\ a product of powers of 2 and 3.
In particular, the curve $Z_{s,t}$ is superspecial if and only if $E_1,E_2$, and $E_3$ are all supersingular.
\end{Prop}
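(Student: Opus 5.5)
The proof combines Theorem~\ref{thm:decomposition} with Proposition~\ref{prop:Case2}, applied to each of $C_1$, $C_2$, $C_3$, and then composes the resulting isogenies.

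First, Theorem~\ref{thm:decomposition} gives an isogeny $J(Z_{s,t}) \to J(C_1)\times J(C_2)\times J(C_3)$ of $2$-power degree. For this we must check that $Z_{s,t}$ is a generalized Howe curve with $r=3$. The six roots of $(x^3-1)(x^3-s^6)$ are distinct because $s^6\neq 1$ and $s\neq 0$. The three shared points $1,\omega,\omega^2$ are the only common roots, because $1,s^6,t^6$ are pairwise distinct. Hence $r=3$, and the third curve is $C_3: y^2=(x^3-s^6)(x^3-t^6)$, as stated.

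Second, we handle each factor with Proposition~\ref{prop:Case2}. For $C_1$, take $a=s^2$, so $a^3=s^6\neq 0,1$. The proposition needs $c$ with $c^2=a^3+1/a^3+2$. Since $s^6+s^{-6}+2=(s^3+s^{-3})^2$, we may take $c_1=s^3+1/s^3$. This gives $J(C_1)\sim E_1^2$ with an isogeny of degree $2^a3^b$. The same argument with $a=t^2$ and $c_2=t^3+1/t^3$ gives $J(C_2)\sim E_2^2$.

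For $C_3$, first apply the isomorphism $x\mapsto t^2x$ (after rescaling $y$). This identifies $C_3$ with $y^2=(x^3-1)(x^3-s^6/t^6)$. Here $a=s^2/t^2$, and $a^3=s^6/t^6\neq 0,1$. Then $c^2=(s/t)^6+(t/s)^6+2=(s^3/t^3+t^3/s^3)^2$, so $c_3=s^3/t^3+t^3/s^3$ is a valid choice. An isomorphism of curves induces an isomorphism of Jacobians, so $J(C_3)\sim E_3^2$ with an isogeny of degree $2^a3^b$.

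Composing these, we obtain $J(Z_{s,t})\to E_1^2\times E_2^2\times E_3^2$. Its degree is the product of the degrees, hence a product of powers of $2$ and $3$.

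For the ``in particular'' part, isogenous abelian varieties are simultaneously supersingular. Being supersingular means being isogenous to a power of a supersingular elliptic curve. So $Z_{s,t}$ is superspecial if and only if each of $C_1$, $C_2$, $C_3$ is superspecial, by Theorem~\ref{thm:decomposition}.

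The main point needing care is the converse direction: why supersingularity of $E_1,E_2,E_3$ implies that $J(Z_{s,t})$ is \emph{isomorphic}, not just isogenous, to a product of supersingular elliptic curves. I would settle this with the standard fact (Oort, for $g\geq 2$) that an abelian variety isogenous to a product of supersingular elliptic curves is superspecial exactly when its $a$-number equals $g$. Equivalently, Frobenius and Verschiebung must vanish on $H^1_{dR}$.

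To avoid that issue, the argument should be routed through Theorem~\ref{thm:decomposition}. Its ``in particular'' statement already gives: $Z_{s,t}$ is superspecial if and only if $C_1,C_2,C_3$ are. For the genus-$2$ curves $C_i$, which carry the $S_3$ reduced automorphism group, superspeciality of $C_i$ is equivalent to supersingularity of $E_i$. This is the same reduction the paper uses in Propositions~\ref{prop:genus4_decomposition} and \ref{prop:genus5_decomposition}. I would cite it in the same way, treating ``superspecial'' up to the equivalence already adopted there, namely via the Case~2 description of $J(C_i)$ as isogenous to $E_i^2$ by an isogeny of degree prime to $p$ (since $p>5$).
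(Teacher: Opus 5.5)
Your proposal is correct and follows the paper's own argument. You apply Theorem~\ref{thm:decomposition} to $Z_{s,t}$, then Proposition~\ref{prop:Case2} to each $C_i$ with $c_1=s^3+1/s^3$, $c_2=t^3+1/t^3$ and, after rescaling $C_3$ by $x\mapsto t^2x$, $c_3=s^3/t^3+t^3/s^3$; as in the paper, the ``in particular'' rests on the isogeny degree being prime to $p$ (since $p>5$). Such an isogeny induces an isomorphism on $p$-torsion and so preserves the $a$-number, which lets Oort's criterion $a=g$ transfer. Your parenthetical saying $F$ and $V$ vanish on all of $H^1_{dR}$ is misstated: superspeciality means $F(H^1_{dR})=V(H^1_{dR})$, i.e.\ the Cartier operator vanishes on $H^0(\Omega^1)$.
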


\newpage
Here, we define the polynomial $g(\alpha)$ by\vspace{-1.5mm}
\begin{equation}\label{eq:g(x)}
    g(\alpha) \coloneqq \sum_{i=0}^{\lfloor p/3 \rfloor} \binom{(p-1)/2}{\lfloor(p+1)/6\rfloor+i}\hspace{-0.5mm}\binom{(p-1)/2}{i}\alpha^i.\vspace{-0.9mm}
\end{equation}
It follows from \cite[Proposition 1.8]{IKO} that the three genus-2 curves $C_1,C_2$, and $C_3$ are superspecial if and only if $g(s^6) = 0,\,g(t^6) = 0$, and $g(s^6/t^6) = 0$, respectively.
Hence, by Theorem~\ref{thm:decomposition}, the curve $Z_{s,t}$ is superspecial if and only if $s^6,t^6$, and $s^6/t^6$ are roots of the polynomial \eqref{eq:g(x)}.
Based on this property, one can construct a {\it naive} algorithm to generate superspecial $Z_{s,t}$ as follows:\vspace{-1.2mm}
\begin{Alg}\label{alg:pre-genus6}
\hspace{1mm}Input: A prime $p > 5$.\\
\hspace{25.7mm}Output: A pair $(s,t)$ such that $Z_{s,t}$ is superspecial in characteristic $p$, or $\bot$.\vspace{-1.1mm}
\begin{enumerate}
\setlength{\leftskip}{22pt}
\item[{\it Step 1.}\,] \hspace{-0.2mm}Compute the set $\mathcal{T}$ of roots $\alpha \neq 0,1$ of the polynomial \eqref{eq:g(x)}.\vspace{-1.2mm}
\item[{\it Step 2.}\,] For each pair $(\alpha_1,\alpha_2) \in \mathcal{T}^2$ with $\alpha_1 \hspace{-0.1mm}\neq \alpha_2$, check whether $\alpha_1/\alpha_2$ belongs to the set $\mathcal{T}$.
If so, output any $(s,t)$ with $s^6 = \alpha_1$ and $t^6 = \alpha_2$.\vspace{-1.1mm}
\item[{\it Step 3.}\,] If no such $(\alpha_1,\alpha_2)$ is found, output $\bot$.\vspace{-0.6mm}
\end{enumerate}
\end{Alg}
\noindent Due to Lemma~\ref{lem:genus6-Fp2}, the outputs $s$ and $t$ (as well as $\alpha_1$ and $\alpha_2$) lie in $\mathbb{F}_{p^2}\hspace{-0.2mm}$.
It follows from \cite[Proposition~1.14]{IKO} that the cardinality of $\mathcal{T}$ computed in {\it Step 1} equals $\lfloor p/3 \rfloor$, which implies that there are $\varTheta(p^2)$ possible choices of pairs $(\alpha_1,\alpha_2)$ in {\it Step 2}.
The probability that $\alpha_1/\alpha_2$ also lies in $\mathcal{T}$ is estimated to be approximately $1/(3p)$ for each $(\alpha_1,\alpha_2)$, and thus the expected number of superspecial $Z_{s,t}$ in characteristic $p$ is $\varTheta(p)$.

However, finding the roots of the polynomial $g(\alpha)$ is computationally expensive in practice, and therefore Algorithm~\ref{alg:pre-genus6} above is not very efficient.
Instead, we describe a way to determine $s$ and $t$ from the $j$-invariants of $E_1$ and $E_2$.
We recall from Proposition~\ref{prop:Case2} that the $j$-invariants of $E_1$ and $E_2$ are given by\vspace{-1.2mm}
\begin{align*}
    j(E_1) = 6912\frac{(2c_1-5)^3}{(c_1-2)(c_1+2)^3} &= 6912\frac{s^3(2s^3-1)^3(s^3-2)^3}{(s^3-1)^2(s^3+1)^6},\\
    j(E_2) = 6912\frac{(2c_2-5)^3}{(c_2-2)(c_2+2)^3} &= 6912\frac{t^3(2t^3-1)^3(t^3-2)^3}{(t^3-1)^2(t^3+1)^6}.\\[-6.8mm]
\end{align*}
Consequently, for given $j_1,j_2 \in \bar{K}$, our curve $Z_{s,t}$ can be constructed by solving the equations\vspace{-1.1mm}
\begin{align}\label{eq:st_for_genus6}
    \begin{split}
    6912s^3(2s^3-1)^3(s^3-2)^3 - j_1(s^3-1)^2(s^3+1)^6 &= 0,\\[-1.2mm]
    6912t^3(2t^3-1)^3(t^3-2)^3 - j_2(t^3-1)^2(t^3+1)^6 &= 0\\[-0.3mm]
    \end{split}
\end{align}
for $s^3$ and $t^3$, provided that $s^3,t^3 \notin \{0,1\}$ and $s^6 \neq t^6$ holds.
Choosing $j_1$ and $j_2$ as supersingular $j$-invariants, it follows from Proposition~\ref{prop:genus6_decomposition} that $Z_{s,t}$ is superspecial if and only if $E_3$ is supersingular.
Also, the $j$-invariant of $E_3$ can be computed from $s^3$ and $t^3$ via the following equation:\vspace{-1mm}
\begin{equation}\label{eq:E3_for_genus6}
    j(E_3) = 6912\frac{(2c_3-5)^3}{(c_3-2)(c_3+2)^3} = 6912\frac{s^3t^3(2s^3-t^3)^3(s^3-2t^3)^3}{(s^3-t^3)^2(s^3+t^3)^6}.
\end{equation}
Therefore, we can generate superspecial $Z_{s,t}$ as follows.
In practice, it is not necessary to determine $s$ and $t$; it suffices to compute only $s^3$ and $t^3$.
For the sake of clarity, however, we describe the following algorithm as outputting $(s,t)$.\vspace{-1mm}
\begin{Alg}\label{alg:genus6}
\hspace{1mm}Input: A prime $p > 5$.\\
\hspace{25.7mm}Output: A pair $(s,t)$ such that $Z_{s,t}$ is superspecial in characteristic $p$, or $\bot$.\vspace{-1.1mm}
\begin{enumerate}
\setlength{\leftskip}{22pt}
\item[{\it Step 1.}\,] \hspace{-0.2mm}Compute the set $\mathcal{S}$ of supersingular $j$-invariants in characteristic $p$.\vspace{-1.3mm}
\item[{\it Step 2.}\,] For each pair $(j_1,j_2) \in \mathcal{S}^2$, solve the equations \eqref{eq:st_for_genus6}, and check whether the value \eqref{eq:E3_for_genus6} belongs to the set $\mathcal{S}$ for each solution $(s,t)$ satisfying $s^3,t^3 \notin\hspace{-0.1mm} \{0,1\}$ and $s^6\hspace{-0.2mm}\neq t^6$.
If so, output $(s,t)$.\vspace{-1.1mm}
\item[{\it Step 3.}\,] If no such $(s,t)$ is found, output $\bot$.\vspace{-0.5mm}
\end{enumerate}
\end{Alg}
\noindent As discussed above, the expected number of superspecial $Z_{s,t}$ is $\varTheta(p)$, which suggests that Algorithm \ref{alg:genus6} will yield at least one such $Z_{s,t}$ for sufficiently large $p$.
In fact, within the range of our experiments, such a curve is found for all $p > 571$ (see Section~\ref{subsec:experiment_for_genus6} for details).

\newpage
\section{Experimental results}\label{sec:experiment}
We implemented Algorithms~\ref{alg:genus4}, \ref{alg:genus5}, and \ref{alg:genus6} as described in the previous sections in \textsf{Magma} Computational Algebra System.
The specific source code is available at the following URL:\vspace{-0.7mm}
\begin{center}
    \url{https://github.com/Ryo-Ohashi/FindSSpGenHowe}.\vspace{1mm}
\end{center}
In this section, we present the experimental results obtained by executing these algorithms.
The experiments were conducted on a machine equipped with an AMD EPYC 7742 CPU and 2TB of RAM.\vspace{-1mm}

\subsection{The existence of superspecial genus-4 curves}\label{subsec:experiment_for_genus4}
We applied Algorithm~\ref{alg:genus4} for all primes $p$ in the range $7 < p < 10^6$ and $p \equiv 1 \pmod{6}$ to  verify the existence of superspecial $X_{s,t}$.
Note that the case $p \equiv 5 \pmod{6}$ can be excluded, as the existence for such $p$ is already guaranteed by Corollary~\ref{cor:existence_genus4}.
This completes the proof of Theorem~\ref{thm:genus4}, which was stated in the Introduction; the total computation time required to establish this result was 206,619 seconds ($\approx$ 57.4 hours).
By combining this theorem with Kudo-Harashita-Howe's result, we conclude the following corollary:\vspace{-0.9mm}
\setcounter{Def}{0}
\begin{Cor}
There exists a superspecial curve of genus $4$ in every characteristic $p$ with $7 < p < 10^6$.\vspace{-0.5mm}
\end{Cor}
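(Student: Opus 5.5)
The corollary follows by combining two ranges of characteristics, with no new mathematics. Let $p$ be a prime with $7 < p < 10^6$. I would split into two cases according to whether $p$ is one of the exceptional primes $13, 19, 73$ of Theorem~\ref{thm:genus4}.

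\emph{Case 1: $p \notin \{13,19,73\}$.} Theorem~\ref{thm:genus4} applies directly and gives a superspecial curve $X_{s,t}$ of genus $4$ in characteristic $p$. Recall how that theorem was obtained. For $p \equiv 5 \pmod 6$, Corollary~\ref{cor:existence_genus4} supplies $X_{\omega,\omega^2}$: $E_1$, $E_3$, $E_4$ all have $j$-invariant $0$, hence are supersingular, so Proposition~\ref{prop:genus4_decomposition} gives superspeciality. For $p \equiv 1 \pmod 6$, Algorithm~\ref{alg:genus4} returned an output $(s,t) \neq \bot$ in the recorded computation, and its correctness again rests on Proposition~\ref{prop:genus4_decomposition}.

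\emph{Case 2: $p \in \{13,19,73\}$.} These primes satisfy $7 < p < 20000$, so the result of Kudo--Harashita--Howe~\cite{KHH} recalled in the Introduction gives a superspecial Howe curve of genus $4$ in characteristic $p$. That curve comes from their algorithm in Section~\ref{subsec:generalized}, via Theorem~\ref{thm:decomposition}, rather than from the subfamily $X_{s,t}$, but this is irrelevant since the corollary only asserts existence of some superspecial genus-$4$ curve.

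The only points needing care are bookkeeping. First, the three exceptional primes must indeed lie inside the Kudo--Harashita--Howe range, which they clearly do. Second, the hypothesis $p>5$ (standing since Section~\ref{sec:preliminaries}) must hold throughout, which it does since $p>7$. There is no genuine obstacle: the substantive content sits in the computer verification underlying Theorem~\ref{thm:genus4} and in~\cite{KHH}, both of which we take as given.
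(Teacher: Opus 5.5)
Your proposal is correct and is the same argument as the paper's: Theorem~\ref{thm:genus4} covers every prime in the range except $13,19,73$, and since these three lie in $7<p<20000$, the Kudo--Harashita--Howe result~\cite{KHH} supplies a superspecial genus-$4$ (Howe) curve for them. Your recap of how Theorem~\ref{thm:genus4} was established is accurate but not needed for the deduction.
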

\begin{Rmk}
To demonstrate the efficiency of Algorithm~\ref{alg:genus4}, we compared its performance with the method of Kudo-Harashita-Howe.
In the range $7 < p < 20000$, the total execution time of their algorithm was 41,491 seconds ($\approx$ 11.5 hours), whereas our algorithm required only 29.7 seconds to test all the same primes.
While our algorithm failed to output a superspecial genus-4 curve for $p \in\hspace{-0.2mm} \{13,19,73\}$, the time spent on these cases was negligible and had almost no impact on the overall comparison.
Consequently, our algorithm achieves a speedup of approximately 1,400 times compared to the method of Kudo-Harashita-Howe.
\end{Rmk}

\subsection{The existence of superspecial genus-5 curves}\label{subsec:experiment_for_genus5}
Applying Algorithm~\ref{alg:genus5} for all primes $p$ in the range $7 \leq p < 10^5$ to verify the existence of superspecial $Y_{s,t}$, we obtain the following result:\vspace{-1mm}
\begin{Thm}\label{thm:pre-genus5}
For every prime $p$ with $19 < p < 10^5$ and $p \neq\hspace{-0.2mm} 37, 53, 89, 97, 101, 137$, there exists a superspecial curve $Y_{s,t}$ of genus 5 in characteristic $p$. 
\end{Thm}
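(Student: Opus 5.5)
The statement is a finite computational claim, so the plan is to certify it by running Algorithm~\ref{alg:genus5} for every prime $p$ with $19<p<10^5$, excluding $p \in \{37,53,89,97,101,137\}$. For each such $p$ we record an explicit output pair $(s,t)$. By Proposition~\ref{prop:genus5_decomposition}, the curve $Y_{s,t}$ is then superspecial provided three things hold: the parameters satisfy $s,t\notin\{0,\pm1\}$ and $s^2\neq t^2$, and the three elliptic curves $E_1,E_2,E_3$ are supersingular. The argument therefore reduces to showing that each recorded output meets these conditions.

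The steps for a fixed $p$ are as follows.
\begin{enumerate}
\item[(i)] Compute the set $\mathcal{S}\subset\mathbb{F}_{p^2}$ of supersingular $j$-invariants, for instance as the roots of the supersingular polynomial, or by isogeny walks from a known supersingular curve.
\item[(ii)] For pairs $(j_1,j_2)\in\mathcal{S}^2$, factor the two degree-$6$ polynomials in \eqref{eq:st_for_genus5} over $\mathbb{F}_{p^2}$. Their roots give the candidates $s$ and $t$. Here \cite[Main Theorem A]{Ohashi} guarantees that restricting to $\mathbb{F}_{p^2}$ loses no solutions.
\item[(iii)] Discard candidates violating $s,t\notin\{0,\pm1\}$ or $s^2\ne t^2$.
\item[(iv)] For each survivor, decide whether $E_3\colon y^2=(x^2-s^2)(x^2-t^2)$ is supersingular. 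After moving a root to infinity this curve becomes a cubic, and we test its $j$-invariant for membership in $\mathcal{S}$. Alternatively, we check that the Hasse invariant, the coefficient of $x^{p-1}$ in the $(p-1)/2$-th power of the quartic, vanishes.
\end{enumerate}
The first pair that passes step (iv) is recorded as a certificate for $p$.

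Correctness then rests on earlier results. Since $j(E_1)=j_1$ and $j(E_2)=j_2$ by construction via Proposition~\ref{prop:Case3}, both $E_1$ and $E_2$ are supersingular. Step (iv) makes $E_3$ supersingular. Proposition~\ref{prop:genus} gives that $Y_{s,t}$ has genus $5$, and Proposition~\ref{prop:genus5_decomposition} then yields superspeciality. Each certificate can be re-verified independently and cheaply by recomputing the three $j$-invariants and testing supersingularity directly, for example via Frobenius trace or the Hasse invariant, without trusting the search itself.

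The main obstacle is practical rather than theoretical: making the search terminate quickly for all roughly $9{,}600$ primes below $10^5$. The heuristic count of $\varTheta(p)$ superspecial $Y_{s,t}$ gives no proof that a solution exists for any particular $p$. Existence for each prime therefore rests entirely on the search actually succeeding.

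To keep the search cheap, I would first restrict $j_1,j_2$ to the supersingular $j$-invariants lying in $\mathbb{F}_p$, of which there are about $\sqrt p$, mirroring the remark after Algorithm~\ref{alg:genus4}. I would enlarge to all of $\mathcal{S}$ only for the primes where this restricted search fails. I would also precompute the root sets of \eqref{eq:st_for_genus5} for each $j$ once and reuse them across all pairs.

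The excluded primes are exactly those where the exhaustive search over $\mathcal{S}^2$ returns $\bot$, so they are simply omitted from the statement.
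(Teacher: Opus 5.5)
Your proposal is correct and follows the paper's own argument: run Algorithm~\ref{alg:genus5} for every prime in the range, and certify each output $(s,t)$ by Proposition~\ref{prop:genus5_decomposition}, with the excluded primes being those where the search returns $\bot$. Your extra refinements are practical optimizations that do not change the route: first restricting to $\mathbb{F}_p$-rational supersingular $j$-invariants (which the paper does only for genus~4), and re-verifying each certificate via the Hasse invariant.
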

\noindent This computation took a total of 55,772 seconds ($\approx 15.5$ hours) to establish Theorem~\ref{thm:pre-genus5}.
For the remaining primes $p$ (except for $p=13$), we individually \hspace{-0.2mm}constructed superspecial generalized Howe curves of genus 5, as\\ described at the beginning of Section \ref{sec:genus5}.
The explicit defining equations for the genus-2 curves $C_1,C_2$ in \eqref{eq:generalized_genus5} found for each $p$ are provided in Appendix~\ref{app:genus5}.
By combining these results, we finally arrive at Theorem \ref{thm:genus5}.

\subsection{The existence of superspecial genus-6 curves}\label{subsec:experiment_for_genus6}Applying Algorithm~\ref{alg:genus6} for all primes $p$ in the range $7 \leq p < 10^5$ to verify the existence of superspecial $Z_{s,t}$, we obtain the following result:\vspace{-1mm}
\begin{Thm}\label{thm:pre-genus6}
For every prime $p$ with $11 < p < 10^5$ and\vspace{-0.8mm}
\[
    p \neq 19, 37, 43, 61, 67, 79, 97, 109, 127, 151, 157, 223, 229, 283, 313, 331, 337, 373, 571,\vspace{0.2mm}
\]
there exists a superspecial curve $Z_{s,t}$ of genus 6 in characteristic $p$. \vspace{-0.7mm}
\end{Thm}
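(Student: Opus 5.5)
The plan is to establish Theorem~\ref{thm:pre-genus6} as a computer-assisted verification built on Proposition~\ref{prop:genus6_decomposition} and Algorithm~\ref{alg:genus6}. The theoretical part is already in place. For $s,t$ with $s^3,t^3\notin\{0,1\}$ and $s^6\neq t^6$, the points $1,s^6,t^6$ are pairwise distinct. Hence $C_1$, $C_2$ share exactly $r=3$ Weierstrass points (the cube roots of $1$), and by Proposition~\ref{prop:genus} the curve $Z_{s,t}$ has genus $6$.

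By Theorem~\ref{thm:decomposition} and Propositions~\ref{prop:Case2} and~\ref{prop:genus6_decomposition}, $Z_{s,t}$ is superspecial exactly when $j(E_1)$, $j(E_2)$ and $j(E_3)$ are supersingular $j$-invariants. Here $j(E_1)$ and $j(E_2)$ are given by the displayed rational functions of $s^3,t^3$, and $j(E_3)$ by \eqref{eq:E3_for_genus6}. Therefore any pair $(u,v)=(s^3,t^3)$ that solves \eqref{eq:st_for_genus6} with $j_1,j_2\in\mathcal{S}$ and satisfies $j(E_3)\in\mathcal{S}$ is a certificate. Any cube roots $s,t$ of $u,v$ then give a superspecial $Z_{s,t}$; note that the $c_i$ depend only on $s^3,t^3$. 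By Lemma~\ref{lem:genus6-Fp2} we may restrict the search to $u,v\in\mathbb{F}_{p^2}$.

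For each prime $p$ with $11<p<10^5$ outside the listed exceptions, the steps are as follows.
\begin{enumerate}
\item Compute $\mathcal{S}\subset\mathbb{F}_{p^2}$, for instance via the roots of the supersingular polynomial or the Hasse invariant in Legendre form, then pass to $j$.
\item For each $j_1\in\mathcal{S}$, find the roots in $\mathbb{F}_{p^2}$ of the degree-$21$ polynomial in $u$ from \eqref{eq:st_for_genus6}, discarding $u\in\{0,1\}$. Cache these root sets so the work is linear in $|\mathcal{S}|$ rather than quadratic.
\item Loop over pairs $(u,v)$ from the cached root sets with $u^2\neq v^2$, evaluate \eqref{eq:E3_for_genus6}, and test membership in $\mathcal{S}$ with a hash set.
\item Stop at the first hit and record $(u,v)$.
\end{enumerate}
A recorded hit can be independently re-checked by verifying the three $j$-values, or directly via $g(s^6)=g(t^6)=g(s^6/t^6)=0$ using \cite[Proposition 1.8]{IKO}. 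This gives an auditable certificate.

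The main obstacle is cost and exhaustiveness, not theory. The search has $\Theta(p^2)$ candidate pairs for $p$ up to $10^5$, so I would first restrict $\mathcal{S}$ to $\mathcal{S}\cap\mathbb{F}_p$, as in the genus-4 remark, and fall back to the full $\mathcal{S}$ only when that fails. The heuristic $\Theta(p)$ expected count suggests a hit is found early for large $p$. For the listed exceptional primes the statement makes no claim, so the only issue is confirming that every other prime in range actually produced an output. I would check this by logging the certificate per prime.
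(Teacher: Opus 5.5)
Your proposal is essentially the paper's proof: the theorem is a computer-assisted result obtained by running Algorithm~\ref{alg:genus6} over every prime in range. Proposition~\ref{prop:genus6_decomposition} and \eqref{eq:E3_for_genus6} reduce the superspeciality of $Z_{s,t}$ to three $j$-values lying in $\mathcal{S}$, and the listed exceptions are exactly the primes where the search found no such $(s,t)$. There are two harmless slips. First, the polynomial in $u=s^3$ from \eqref{eq:st_for_genus6} has degree $8$ (degree $7$ when $j_1=0$), not $21$. Second, $s^3\notin\{0,1\}$ and $s^6\neq t^6$ do not by themselves force $s^6\neq 1$ (take $u=-1$); this does not matter because $u=-1$ is never a root, as substituting it gives $-2^8\cdot 3^9\neq 0$ for $p>3$.
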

\noindent This computation took a total of 79,666 seconds ($\approx 22.1$ hours) to establish Theorem~\ref{thm:pre-genus6}.
For the remaining primes $p$ with $p>7$, we individually \hspace{-0.2mm}constructed superspecial generalized Howe curves of genus 6, as described at the beginning of Section \ref{sec:genus6}.
The explicit defining equations for the genus-2 curves $C_1,C_2$ in \eqref{eq:generalized_genus6} found for each $p$ are provided in Appendix~\ref{app:genus6}.
By combining these results, we finally arrive at Theorem \ref{thm:genus6}.\vspace{-1mm}

\paragraph*{Acknowledgements.}
This research was supported by JSPS Grant-in-Aid for Young Scientists 25K17225.

\vspace{2.3mm}

\textsc{Graduate School of Information Science and Technology, The
University of Tokyo — 7-3-1 Hongo, Bunkyo-ku, Tokyo, 113-0033, Japan.}\par
\textit{E-mail address}: \url{ryo-ohashi@g.ecc.u-tokyo.ac.jp}

\newpage
\appendix
\section{Constructions for characteristics excluded in Theorem 6.3}\label{app:genus5}
In this appendix, we provide examples of superspecial generalized Howe curves of genus 5 for characteristic $p$ with $p \in\hspace{-0.2mm} \{7,11,17,19,37,53,89,97,101,137\}$.
For the construction of examples, we first collect all Rosenhain forms of superspecial genus-2 curves, following Step 1 of \cite[Algorithm 3.1.2]{OK}.
Next, we select pairs $(C_1,C_2)$ among them sharing exactly four \hspace{-0.4mm}Weierstrass points, given by\vspace{-1.4mm}
\begin{align*}
    C_1: y^2 &= x(x-1)(x-\lambda_1)(x-\lambda_2)(x-\lambda_3),\\[-1.5mm]
    C_2: y^2 &= x(x-1)(x-\lambda_1)(x-\lambda'_2)(x-\lambda'_3),\\[-5.2mm]
\end{align*}
where $\lambda_1,\lambda_2,\lambda_3,\lambda'_2,\lambda'_3$ are pairwise distinct.
We then determine whether the elliptic curve defined by\vspace{-1.3mm}
\[
    E_3 : y^2 = (x-\lambda_2)(x-\lambda_3)(x-\lambda'_2)(x-\lambda'_3)
\]
is supersingular.
For each characteristic $p$ listed above, such a pair $(C_1, C_2)$ of superspecial genus-2 curves is found, and is presented below:\vspace{-0.9mm}
\begin{itemize}
    \item For $p=7$, we found a pair $(C_1,C_2)$ given by\vspace{-1.2mm}
    \begin{align*}
        C_1: y^2 &= x(x-1)(x-\zeta^{34})(x-\zeta^2)(x-\zeta^{36}),\\[-1.5mm]
        C_2: y^2 &= x(x-1)(x-\zeta^{34})(x-4)(x-\zeta^{46}),
    \end{align*}
    where $\zeta \in \mathbb{F}_{7^2}\hspace{-0.2mm}$ is a root of the quadratic equation $z^2 - z + 3 = 0$.\vspace{-0.7mm}
    \item For $p=11$, we found a pair $(C_1,C_2)$ given by\vspace{-1.2mm}
    \begin{align*}
        C_1: y^2 &= x(x-1)(x-\zeta^2)(x-\zeta^{80})(x-\zeta^{98}),\\[-1.5mm]
        C_2: y^2 &= x(x-1)(x-\zeta^2)(x-\zeta^{86})(x-\zeta^{88}),
    \end{align*}
    where $\zeta \in \mathbb{F}_{11^2}\hspace{-0.2mm}$ is a root of the quadratic equation $z^2 - 4z + 2 = 0$.\vspace{-0.7mm}
    \item For $p=17$, we found a pair $(C_1,C_2)$ given by\vspace{-1.2mm}
    \begin{align*}
        C_1: y^2 &= x(x-1)(x-16)(x-\zeta^{194})(x-\zeta^{238}),\\[-1.5mm]
        C_2: y^2 &= x(x-1)(x-16)(x-\zeta^{50})(x-\zeta^{94}),
    \end{align*}
    where $\zeta \in \mathbb{F}_{17^2}\hspace{-0.2mm}$ is a root of the quadratic equation $z^2 - z + 3 = 0$.\vspace{-0.7mm}
    \item For $p=19$, we found a pair $(C_1,C_2)$ given by\vspace{-1.2mm}
    \begin{align*}
        C_1: y^2 &= x(x-1)(x-3)(x-17)(x-10),\\[-1.5mm]
        C_2: y^2 &= x(x-1)(x-3)(x-\zeta^{34})(x-\zeta^{302}),
    \end{align*}
    where $\zeta \in \mathbb{F}_{19^2}\hspace{-0.2mm}$ is a root of the quadratic equation $z^2 - z + 2 = 0$.\vspace{-0.7mm}
    \item For $p=37$, we found a pair $(C_1,C_2)$ given by\vspace{-1.2mm}
    \begin{align*}
        C_1: y^2 &= x(x-1)(x-\zeta^{1306})(x-\zeta^{156})(x-\zeta^{574}),\\[-1.5mm]
        C_2: y^2 &= x(x-1)(x-\zeta^{1306})(x-\zeta^{584})(x-\zeta^{1138}),
    \end{align*}
    where $\zeta \in \mathbb{F}_{37^2}\hspace{-0.2mm}$ is a root of the quadratic equation $z^2 - 4z + 2 = 0$.\vspace{-0.7mm}
    \item For $p=53$, we found a pair $(C_1,C_2)$ given by\vspace{-1.2mm}
    \begin{align*}
        C_1: y^2 &= x(x-1)(x-\zeta^{2})(x-\zeta^{538})(x-\zeta^{1288}),\\[-1.5mm]
        C_2: y^2 &= x(x-1)(x-\zeta^{2})(x-\zeta^{220})(x-\zeta^{2590}),
    \end{align*}
    where $\zeta \in \mathbb{F}_{53^2}\hspace{-0.2mm}$ is a root of the quadratic equation $z^2 - 4z + 2 = 0$.\vspace{-0.7mm}
    \item For $p=89$, we found a pair $(C_1,C_2)$ given by\vspace{-1.2mm}
    \begin{align*}
        C_1: y^2 &= x(x-1)(x-\zeta^{7220})(x-66)(x-\zeta^{1646}),\\[-1.5mm]
        C_2: y^2 &= x(x-1)(x-\zeta^{7220})(x-\zeta^{4722})(x-\zeta^{5222}),
    \end{align*}
    where $\zeta \in \mathbb{F}_{89^2}\hspace{-0.2mm}$ is a root of the quadratic equation $z^2 - 7z + 3 = 0$.\vspace{-0.7mm}
    \item For $p=97$, we found a pair $(C_1,C_2)$ given by\vspace{-1.2mm}
    \begin{align*}
        C_1: y^2 &= x(x-1)(x-\zeta^{2})(x-\zeta^{4030})(x-\zeta^{6088}),\\[-1.5mm]
        C_2: y^2 &= x(x-1)(x-\zeta^{2})(x-\zeta^{3442})(x-\zeta^{5808}),
    \end{align*}
    where $\zeta \in \mathbb{F}_{97^2}\hspace{-0.2mm}$ is a root of the quadratic equation $z^2 - z + 5 = 0$.\vspace{-0.7mm}
    \item For $p=101$, we found a pair $(C_1,C_2)$ given by\vspace{-1.2mm}
    \begin{align*}
        C_1: y^2 &= x(x-1)(x-\zeta^{7214})(x-\zeta^{2882})(x-\zeta^{7700}),\\[-1.5mm]
        C_2: y^2 &= x(x-1)(x-\zeta^{7214})(x-\zeta^{4920})(x-\zeta^{6838}),
    \end{align*}
    where $\zeta \in \mathbb{F}_{101^2}\hspace{-0.2mm}$ is a root of the quadratic equation $z^2 - 4z + 2 = 0$.\vspace{-0.7mm}
    \item For $p=137$, we found a pair $(C_1,C_2)$ given by\vspace{-1.2mm}
    \begin{align*}
        C_1: y^2 &= x(x-1)(x-\zeta^{12758})(x-\zeta^{2582})(x-\zeta^{12468}),\\[-1.5mm]
        C_2: y^2 &= x(x-1)(x-\zeta^{12758})(x-130)(x-\zeta^{15614}),
    \end{align*}
    where $\zeta \in \mathbb{F}_{137^2}\hspace{-0.2mm}$ is a root of the quadratic equation $z^2 - 6z + 3 = 0$.\vspace{-0.7mm}
\end{itemize}
This completes the construction of superspecial genus-5 curves for the characteristics $p$ remaining in the proof of Theorem \ref{thm:genus5}.\vspace{-2mm}

\section{Constructions for characteristics excluded in Theorem 6.4}\label{app:genus6}
In this appendix, we provide examples of superspecial generalized Howe curves of genus 6 for characteristic $p$\\ with $p \in\hspace{-0.2mm} \{11,19,37,43,61,67,79,97,109,127,151,157,223,229,283,313,331,337,373,571\}$.
In a similar way to Appendix~\ref{app:genus5}, after listing all Rosenhain forms of genus 2 curves, we select pairs $(C_1, C_2)$ that share exactly three \hspace{-0.4mm}Weierstrass points, given by\vspace{-0.8mm}
\begin{align*}
    C_1: y^2 &= x(x-1)(x-\lambda_1)(x-\lambda_2)(x-\lambda_3),\\[-1.5mm]
    C_2: y^2 &= x(x-1)(x-\lambda'_1)(x-\lambda'_2)(x-\lambda'_3),\\[-5.4mm]
\end{align*}
where $\lambda_1,\lambda_2,\lambda_3,\lambda'_1,\lambda'_2,\lambda'_3$ are pairwise distinct elements of $\mathbb{F}_{p^2}\hspace{-0.3mm}$ other than $0$ and $1$.
Subsequently, we determine whether the third genus-2 curve defined by the equation\vspace{-1.2mm}
\[
    C_3: y^2 = (x-\lambda_1)(x-\lambda_2)(x-\lambda_3)(x-\lambda'_1)(x-\lambda'_2)(x-\lambda'_3)
\]
is also superspecial.
For each characteristic $p$ listed above, such a pair $(C_1,C_2)$ of superspecial genus-2 curves is found, and is presented below:\vspace{-0.9mm}
\begin{itemize}
    \item For $p=11$, we found a pair $(C_1,C_2)$ given by\vspace{-1.2mm}
    \begin{align*}
        C_1: y^2 &= x(x-1)(x-2)(x-7)(x-3),\\[-1.5mm]
        C_2: y^2 &= x(x-1)(x-5)(x-10)(x-9),\\[-5.2mm]
    \end{align*}
    which are both defined over $\mathbb{F}_{11}$.\vspace{-0.7mm}
    \item For $p=19$, we found a pair $(C_1,C_2)$ given by\vspace{-1.2mm}
    \begin{align*}
        C_1: y^2 &= x(x-1)(x-\zeta^{50})(x-\zeta^{250})(x-\zeta^{332}),\\[-1.5mm]
        C_2: y^2 &= x(x-1)(x-\zeta^{130})(x-\zeta^{172})(x-\zeta^{290}),\\[-5.2mm]
    \end{align*}
    where $\zeta \in \mathbb{F}_{19^2}\hspace{-0.2mm}$ is a root of the quadratic equation $z^2 - z + 2 = 0$.\vspace{-0.7mm}
    \item For $p=37$, we found a pair $(C_1,C_2)$ given by\vspace{-1.2mm}
    \begin{align*}
        C_1: y^2 &= x(x-1)(x-15)(x-5)(x-\zeta^{1044}),\\[-1.5mm]
        C_2: y^2 &= x(x-1)(x-\zeta^{62})(x-\zeta^{252})(x-\zeta^{442}),\\[-5.2mm]
    \end{align*}
    where $\zeta \in \mathbb{F}_{37^2}\hspace{-0.2mm}$ is a root of the quadratic equation $z^2 - 4z + 2 = 0$.\vspace{-0.7mm} 
    \item For $p=43$, we found a pair $(C_1,C_2)$ given by\vspace{-1.2mm}
    \begin{align*}
        C_1: y^2 &= x(x-1)(x-\zeta^{224})(x-\zeta^{1104})(x-\zeta^{1524}),\\[-1.5mm]
        C_2: y^2 &= x(x-1)(x-\zeta^{232})(x-\zeta^{252})(x-\zeta^{780}),\\[-5.2mm]
    \end{align*}
    where $\zeta \in \mathbb{F}_{43^2}\hspace{-0.2mm}$ is a root of the quadratic equation $z^2 - z + 3 = 0$.\vspace{-0.7mm}\newpage
    \item For $p=61$, we found a pair $(C_1,C_2)$ given by\vspace{-1.2mm}
    \begin{align*}
        C_1: y^2 &= x(x-1)(x-\zeta^{483})(x-\zeta^{1974})(x-\zeta^{3164}),\\[-1.5mm]
        C_2: y^2 &= x(x-1)(x-\zeta^{114})(x-\zeta^{436})(x-\zeta^{2346}),\\[-5.4mm]
    \end{align*}
    where $\zeta \in \mathbb{F}_{61^2}\hspace{-0.2mm}$ is a root of the quadratic equation $z^2 - z + 2 = 0$.\vspace{-0.7mm}
    \item For $p=67$, we found a pair $(C_1,C_2)$ given by\vspace{-1.2mm}
    \begin{align*}
        C_1: y^2 &= x(x-1)(x-\zeta^{1148})(x-\zeta^{3734})(x-\zeta^{4346}),\\[-1.5mm]
        C_2: y^2 &= x(x-1)(x-\zeta^{538})(x-\zeta^{1150})(x-\zeta^{3868}),\\[-5.4mm]
    \end{align*}
    where $\zeta \in \mathbb{F}_{67^2}\hspace{-0.2mm}$ is a root of the quadratic equation $z^2 - 4z + 2 = 0$.\vspace{-0.7mm}
    \item For $p=79$, we found a pair $(C_1,C_2)$ given by\vspace{-1.2mm}
    \begin{align*}
        C_1: y^2 &= x(x-1)(x-\zeta^{2118})(x-71)(x-\zeta^{5082}),\\[-1.5mm]
        C_2: y^2 &= x(x-1)(x-32)(x-35)(x-7),\\[-5.4mm]
    \end{align*}
    where $\zeta \in \mathbb{F}_{79^2}\hspace{-0.2mm}$ is a root of the quadratic equation $z^2 - z + 3 = 0$.\vspace{-0.7mm}
    \item For $p=97$, we found a pair $(C_1,C_2)$ given by\vspace{-1.2mm}
    \begin{align*}
        C_1: y^2 &= x(x-1)(x-\zeta^{2292})(x-\zeta^{3468})(x-\zeta^{7968}),\\[-1.5mm]
        C_2: y^2 &= x(x-1)(x-\zeta^{12})(x-\zeta^{7776})(x-\zeta^{8244}),\\[-5.4mm]
    \end{align*}
    where $\zeta \in \mathbb{F}_{97^2}\hspace{-0.2mm}$ is a root of the quadratic equation $z^2 - z + 5 = 0$.\vspace{-0.7mm}
    \item For $p=109$, we found a pair $(C_1,C_2)$ given by\vspace{-1.2mm}
    \begin{align*}
        C_1: y^2 &= x(x-1)(x-\zeta^{1476})(x-\zeta^{1860})(x-\zeta^{2136}),\\[-1.5mm]
        C_2: y^2 &= x(x-1)(x-\zeta^{4584})(x-\zeta^{10908})(x-\zeta^{11184}),\\[-5.4mm]
    \end{align*}
    where $\zeta \in \mathbb{F}_{109^2}\hspace{-0.2mm}$ is a root of the quadratic equation $z^2 - z + 6 = 0$.\vspace{-0.7mm}
    \item For $p=127$, we found a pair $(C_1,C_2)$ given by\vspace{-1.2mm}
    \begin{align*}
        C_1: y^2 &= x(x-1)(x-\zeta^{502})(x-\zeta^{9172})(x-\zeta^{13058}),\\[-1.5mm]
        C_2: y^2 &= x(x-1)(x-\zeta^{1440})(x-\zeta^{1874})(x-\zeta^{15370}),\\[-5.4mm]
    \end{align*}
    where $\zeta \in \mathbb{F}_{127^2}\hspace{-0.2mm}$ is a root of the quadratic equation $z^2 - z + 3 = 0$.\vspace{-0.7mm}
    \item For $p=151$, we found a pair $(C_1,C_2)$ given by\vspace{-1.2mm}
    \begin{align*}
        C_1: y^2 &= x(x-1)(x-\zeta^{10800})(x-\zeta^{13500})(x-\zeta^{18900}),\\[-1.5mm]
        C_2: y^2 &= x(x-1)(x-\zeta^{600})(x-\zeta^{1500})(x-\zeta^{4500}),\\[-5.4mm]
    \end{align*}
    where $\zeta \in \mathbb{F}_{151^2}\hspace{-0.2mm}$ is a root of the quadratic equation $z^2 - 2z + 6 = 0$.\vspace{-0.7mm}
    \item For $p=157$, we found a pair $(C_1,C_2)$ given by\vspace{-1.2mm}
    \begin{align*}
        C_1: y^2 &= x(x-1)(x-\zeta^{66})(x-\zeta^{7152})(x-\zeta^{14918}),\\[-1.5mm]
        C_2: y^2 &= x(x-1)(x-\zeta^{13262})(x-\zeta^{14210})(x-\zeta^{21296}),\\[-5.4mm]
    \end{align*}
    where $\zeta \in \mathbb{F}_{157^2}\hspace{-0.2mm}$ is a root of the quadratic equation $z^2 - 5z + 5 = 0$.\vspace{-0.7mm}
    \item For $p=223$, we found a pair $(C_1,C_2)$ given by\vspace{-1.2mm}
    \begin{align*}
        C_1: y^2 &= x(x-1)(x-\zeta^{37056})(x-\zeta^{40200})(x-\zeta^{44956}),\\[-1.5mm]
        C_2: y^2 &= x(x-1)(x-\zeta^{13330})(x-\zeta^{38554})(x-\zeta^{40216}),\\[-5.4mm]
    \end{align*}
    where $\zeta \in \mathbb{F}_{223^2}\hspace{-0.2mm}$ is a root of the quadratic equation $z^2 - 2z + 3 = 0$.\vspace{-0.7mm}\newpage
    \item For $p=229$, we found a pair $(C_1,C_2)$ given by\vspace{-1.2mm}
    \begin{align*}
        C_1: y^2 &= x(x-1)(x-\zeta^{23644})(x-\zeta^{32210})(x-\zeta^{38638}),\\[-1.5mm]
        C_2: y^2 &= x(x-1)(x-\zeta^{3610})(x-\zeta^{31024})(x-\zeta^{39590}),\\[-5.4mm]
    \end{align*}
    where $\zeta \in \mathbb{F}_{229^2}\hspace{-0.2mm}$ is a root of the quadratic equation $z^2 - z + 6 = 0$.\vspace{-0.7mm}
    \item For $p=283$, we found a pair $(C_1,C_2)$ given by\vspace{-1.2mm}
    \begin{align*}
        C_1: y^2 &= x(x-1)(x-186)(x-149)(x-271),\\[-1.5mm]
        C_2: y^2 &= x(x-1)(x-165)(x-19)(x-35),\\[-5.4mm]
    \end{align*}
    where are both defined over $\mathbb{F}_{283}$.\vspace{-0.7mm}
    \item For $p=313$, we found a pair $(C_1,C_2)$ given by\vspace{-1.2mm}
    \begin{align*}
        C_1: y^2 &= x(x-1)(x-\zeta^{32774})(x-\zeta^{44078})(x-\zeta^{97436}),\\[-1.5mm]
        C_2: y^2 &= x(x-1)(x-\zeta^{2108})(x-\zeta^{32100})(x-\zeta^{52824}),\\[-5.4mm]
    \end{align*}
    where $\zeta \in \mathbb{F}_{313^2}\hspace{-0.2mm}$ is a root of the quadratic equation $z^2 - 3z + 10 = 0$.\vspace{-0.7mm}
    \item For $p=331$, we found a pair $(C_1,C_2)$ given by\vspace{-1.2mm}
    \begin{align*}
        C_1: y^2 &= x(x-1)(x-\zeta^{45614})(x-\zeta^{59208})(x-\zeta^{83986}),\\[-1.5mm]
        C_2: y^2 &= x(x-1)(x-\zeta^{16352})(x-\zeta^{78768})(x-\zeta^{79248}),\\[-5.4mm]
    \end{align*}
    where $\zeta \in \mathbb{F}_{331^2}\hspace{-0.2mm}$ is a root of the quadratic equation $z^2 - 5z + 3 = 0$.\vspace{-0.7mm}
    \item For $p=337$, we found a pair $(C_1,C_2)$ given by\vspace{-1.2mm}
    \begin{align*}
        C_1: y^2 &= x(x-1)(x-\zeta^{13218})(x-\zeta^{60362})(x-\zeta^{83388}),\\[-1.5mm]
        C_2: y^2 &= x(x-1)(x-\zeta^{12942})(x-\zeta^{44456})(x-\zeta^{44962}),\\[-5.4mm]
    \end{align*}
    where $\zeta \in \mathbb{F}_{337^2}\hspace{-0.2mm}$ is a root of the quadratic equation $z^2 - 5z + 10 = 0$.\vspace{-0.7mm}
    \item For $p=373$, we found a pair $(C_1,C_2)$ given by\vspace{-1.2mm}
    \begin{align*}
        C_1: y^2 &= x(x-1)(x-\zeta^{26442})(x-\zeta^{58480})(x-\zeta^{125794}),\\[-1.5mm]
        C_2: y^2 &= x(x-1)(x-\zeta^{22264})(x-\zeta^{26406})(x-\zeta^{53334}),\\[-5.4mm]
    \end{align*}
    where $\zeta \in \mathbb{F}_{373^2}\hspace{-0.2mm}$ is a root of the quadratic equation $z^2 - 4z + 2 = 0$.\vspace{-0.7mm}
    \item For $p=571$, we found a pair $(C_1,C_2)$ given by\vspace{-1.2mm}
    \begin{align*}
        C_1: y^2 &= x(x-1)(x-\zeta^{53462})(x-\zeta^{296376})(x-\zeta^{301634}),\\[-1.5mm]
        C_2: y^2 &= x(x-1)(x-\zeta^{181188})(x-\zeta^{188806})(x-\zeta^{190810}),\\[-5.4mm]
    \end{align*}
    where $\zeta \in \mathbb{F}_{571^2}\hspace{-0.2mm}$ is a root of the quadratic equation $z^2 - z + 3 = 0$.\vspace{-0.7mm}
\end{itemize}
This completes the construction of superspecial genus-6 curves for the characteristics $p$ remaining in the proof of Theorem \ref{thm:genus6}.
\end{document}